\newtheorem{theorem}{Theorem}[section]
\newtheorem{lemma}[theorem]{Lemma}
\newtheorem{definition}{Definition}
\newtheorem{example}[theorem]{Example}
\DeclareMathOperator*{\diam}{diam}
\title{The de Bruijn-Erd\H{o}s theorem from a Hausdorff measure point of view}
\author{
Martin Dole\v{z}al \thanks{Institute of Mathematics, Czech Academy of Sciences, \v{Z}itn\'a 25, 
115 67,   Praha 1, 
Czech Republic.  
Research of Dole\v{z}al was supported by the GA\v{C}R project 17-27844S and RVO: 67985840.
E-mail:. dolezal@math.cas.cz} 
\and
Themis Mitsis\thanks{Department of Mathematics and Applied Mathematics, University of Crete, 70013 Heraklion, Greece. 
E-mail: themis.mitsis@gmail.com}
\and
Christos Pelekis\thanks{
Institute of Mathematics, Czech Academy of Sciences, \v{Z}itn\'a 25, 
115 67,   Praha 1, 
Czech Republic.  
Research supported by the GA\v{C}R project 18-01472Y and RVO: 67985840. 
E-mail: pelekis.chr@gmail.com} }
\begin{document}
	\maketitle
	
\begin{abstract}
Motivated by a well-known result in extremal set theory, due to Nicolaas Govert de Bruijn and Paul Erd\H{o}s, we consider curves in the unit $n$-cube $[0,1]^n$ of the form 
\[ A=\{(x,f_1(x),\ldots,f_{n-2}(x),\alpha): x\in [0,1]\}, \]
where $\alpha$ is a fixed real number in $[0,1]$ and $f_1,\ldots,f_{n-2}$ are injective measurable functions from $[0,1]$ to $[0,1]$. We refer to such a curve $A$ as an $n$-\emph{de~Bruijn-Erd\H{o}s-set}.  
Under the additional assumption that all functions $f_i,i=1,\ldots,n-2,$ are piecewise monotone, 
we show that the Hausdorff dimension of $A$ is at most $1$ as well as that its $1$-dimensional Hausdorff measure is at most $n-1$.  Moreover, via a walk along devil's staircases, we construct a piecewise monotone $n$-de~Bruijn-Erd\H{o}s-set whose $1$-dimensional Hausdorff measure equals $n-1$.  
\end{abstract}

\noindent \emph{Keywords}: de Bruijn-Erd\H{o}s theorem; Hausdorff measure; devil's staircase; piecewise monotone functions

\noindent \emph{MSC (2010)}: 05D05; 28A78; 26A30

\section{Prologue, related work and main results}

Here and later, $[n]$ denotes the set of positive integers $\{1,\ldots,n\}$. The collection  of all subsets of $[n]$ is denoted $2^{[n]}$. 
Given  $i\in [n]$, we denote by $\pi_i:[0,1]^n\to [0,1]^n$ the function that maps the point  
$(x_1,\ldots,x_n)$ to the point $(y_1,\ldots,y_n)$, where $y_i=x_i$, and $y_j=0$ for $j\neq i$. In other words, $\pi_i$ is the projection onto the $i$-th coordinate.  
Given a finite set $F$, we denote by $|F|$ its cardinality. Finally,  $\lambda(\cdot)$ denotes the Lebesgue measure on the real line. 

We shall be interested in an extremal problem which is motivated by a particular result from extremal set theory.  Extremal set theory (see \cite{Anderson, Engeltwo}) is concerned with the problem of obtaining sharp estimates on the cardinality of a family $\mathcal{F}\subset 2^{[n]}$ under constraints that are described in terms of union, intersection or inclusion.  This is a rapidly evolving area in combinatorics which interacts 
with various branches of mathematics and theoretical computer science including geometry, probability theory, analysis and complexity theory.  Part of this interaction is based upon the idea that several  results from  extremal set theory have continuous counterparts. 
This is an idea that dates back to the 70's and, since its conception, several results have been reported in a measurable setting (see \cite{engel, katonaone, katonatwo, klainrota}) as well as in a vector space setting (see \cite{blokhuis, Frankl_Tokushige, FranklWilson, katchalski}). 
In this article we look at results of extremal set theory from a Hausdorff dimension point of view. 
In \cite{EMP}, Konrad Engel and the last two authors reported analogues of Sperner's theorem and the Erd\H{o}s-Ko-Rado theorem in this setting. In this article, we investigate an extremal problem which is motivated by a well-known result from extremal set theory, due to de Bruijn and Erd\H{o}s (see \cite{dbruijnerdos}), which reads as follows. \\

\begin{theorem}[de~Bruijn-Erd\H{o}s]
Let $\mathcal{F}\subset 2^{[n]}$. Assume that for any two sets $A,B\in \mathcal{F}$ there exists unique $i\in [n]$ such that $i\in A\cap B$. 
Then $|\mathcal{F}|\le n$. 
\end{theorem}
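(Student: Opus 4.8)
The plan is to use the linear-algebra (Fisher-type) method. For each $A\in\mathcal{F}$ let $v_A\in\mathbb{R}^n$ denote its characteristic vector, so that the $i$-th coordinate of $v_A$ is $1$ if $i\in A$ and $0$ otherwise. The hypothesis says that any two distinct members of $\mathcal{F}$ meet in exactly one coordinate, which I would translate into the inner-product relations $\langle v_A,v_B\rangle=|A\cap B|=1$ whenever $A\ne B$, together with $\langle v_A,v_A\rangle=|A|$. The key observation is that it suffices to prove that the family $\{v_A:A\in\mathcal{F}\}$ is linearly independent: since all these vectors lie in $\mathbb{R}^n$, linear independence immediately forces $|\mathcal{F}|\le n$.

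To establish linear independence I would examine the Gram matrix $M=\bigl(\langle v_A,v_B\rangle\bigr)_{A,B\in\mathcal{F}}$, which by the above has the entry $1$ in every off-diagonal position and $|A|$ on the diagonal. Writing $J$ for the all-ones matrix of the appropriate size, this reads $M=J+\operatorname{diag}(|A|-1)$. If every set in $\mathcal{F}$ has cardinality at least $2$, then $\operatorname{diag}(|A|-1)$ is positive definite while $J$ is positive semidefinite, so $M$ is positive definite and in particular nonsingular. Nonsingularity of the Gram matrix is equivalent to linear independence of the $v_A$, and the bound follows.

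The main obstacle is the degenerate case in which some member of $\mathcal{F}$ is a singleton, say $A=\{i\}$, for then the corresponding diagonal contribution $|A|-1$ vanishes and the positive-definiteness argument can break down. Here I would switch to a direct combinatorial count. Every other $B\in\mathcal{F}$ satisfies $|\{i\}\cap B|=1$, hence contains $i$; and for any two such sets $B,C$ the condition $B\cap C=\{i\}$ forces the sets $B\setminus\{i\}$ to be pairwise disjoint, nonempty subsets of $[n]\setminus\{i\}$. Since an $(n-1)$-element set admits at most $n-1$ pairwise disjoint nonempty subsets, adding back the singleton $\{i\}$ itself yields $|\mathcal{F}|\le n$ in this case as well. (Two distinct singletons would be disjoint, contradicting the hypothesis, so at most one such set occurs, consistently with the bound; and throughout ``any two sets'' is understood as any two \emph{distinct} sets.) Combining the two cases completes the proof, and the bound is sharp, being attained for instance by a near-pencil.
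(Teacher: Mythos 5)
Your proof is correct, but note that the paper never proves this theorem at all: it is quoted as background, with the proof deferred to the cited references (de Bruijn--Erd\H{o}s's original paper and Cameron's book), so there is no internal argument to compare against. Those references give the classical combinatorial proof (a counting/double-counting argument on point degrees, which in its full form also characterizes the extremal configurations as near-pencils and projective planes); you instead take the linear-algebra (Fisher-type) route, and it works: the Gram matrix of the characteristic vectors is $M=J+\operatorname{diag}(|A|-1)$, which is positive definite (hence nonsingular) once every member has at least two elements, so the vectors are linearly independent in $\mathbb{R}^n$ and $|\mathcal{F}|\le n$; and your separate treatment of the singleton case is sound, since all other sets must contain the common point $i$, the sets $B\setminus\{i\}$ are pairwise disjoint and nonempty in $[n]\setminus\{i\}$, and so number at most $n-1$. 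One pedantic remark: your two cases (all members of size at least $2$, or some member a singleton) do not formally cover $\emptyset\in\mathcal{F}$; this is a one-line fix, since then either $\mathcal{F}=\{\emptyset\}$, which trivially satisfies the bound, or the uniqueness hypothesis fails for the pair $(\emptyset,B)$. As a trade-off, your method is shorter and generalizes directly (e.g.\ to Fisher's inequality, where all pairwise intersections have a fixed size $\lambda$), whereas the classical argument in the cited sources is more elementary and yields the equality characterization, which the algebraic argument does not immediately provide.
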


See \cite{dbruijnerdos}, or \cite[Theorem 7.3.1]{cameron} for a proof of this result. Let us remark that the bound is sharp and is attained by the family $\mathcal{F}=\{[n]\setminus \{1\}, \{1,2\},\{1,3\},\ldots,\{1,n\}\}$,  which is referred to in the literature as a \emph{near-pencil}.
 
We look at the de Bruijn-Erd\H{o}s theorem from a Hausdorff measure-theoretic perspective. Before being more precise, let us briefly mention that one can identify a set $A\subset [n]$ with a binary vector of length $n$: put $1$ in the $i$-th coordinate if $i\in A$, and $0$ otherwise. Notice that this correspondence is bijective and one may choose not to distinguish between subsets and binary vectors. With this remark in mind, the de Bruijn-Erd\H{o}s theorem can be expressed by saying that if $A\subset \{0,1\}^n$ is such that for every two points  
$\mathbf{x}=(x_1,\ldots, x_n)$ and $\mathbf{y} = (y_1,\ldots,y_n)$ in $A$ there exists \emph{unique} $i\in [n]$ with $x_i=y_i>0$ then $|A|\le n$. Inspired by the last observation, we introduce the following. \\

\begin{definition}[$n$-de~Bruijn-Erd\H{o}s-sets]\label{def:1}
Fix a positive integer $n\ge 2$. 
An $n$-\emph{de~Bruijn-Erd\H{o}s-set} (or $n$-dBE-set for short) is a measurable set $A\subset [0,1]^n$ such that for any two points $\mathbf{x}=(x_1,\ldots, x_n)$ and $\mathbf{y} = (y_1,\ldots,y_n)$ in  $A$ there exists \emph{unique} $i\in [n]$ with $x_i=y_i$. 
\end{definition}
  
Given $i\in [n]$ and $\alpha\in [0,1]$, let $H_{i, \alpha}^n$ denote the hyperplane 
$\{(x_1,\ldots,x_n)\in [0,1]^n: x_i = \alpha\}$. 
It is easy to see that the $n$-dimensional Lebesgue measure of an $n$-dBE-set equals zero:  fix $\mathbf{a}=(a_1,\ldots,a_n)\in A$ and notice that $A\subset \cup_{i=1}^{n} H_{i, a_i}^n$. This means that $A$ is contained in the finite union of sets of dimension $n-1$ and therefore its $n$-dimensional Lebesgue measure is zero. Given this fact it is natural to look at the Hausdorff dimension of an $n$-dBE-set. 
Let us recall some notions from the theory of Hausdorff measures. 

If $A$ is a non-empty subset of $\mathbb{R}^n$, we denote by 
$\diam(A)$ its diameter. Fix a positive real number $s$ and, for $\delta>0$, let  
\[ \mathcal{H}_{\delta}^{s}(A) = \inf \left\{ \sum_{i}\textrm{diam}(U_i)^s: A \subset \bigcup_i U_i \; \textrm{and} \;  \textrm{diam}(U_i)\le \delta  \right\}
\]
where the infimum is taken over all at most countable covers $\{U_i\}$ of $A$ such that the diameter of each $U_i$ is at most $\delta$.
The limit $\lim_{\delta\rightarrow 0_+}\mathcal{H}_{\delta}^{s}(A)$, denoted $\mathcal{H}^{s}(A)$, is the $s$-\emph{dimensional Hausdorff measure} of $A$. The \emph{Hausdorff dimension} of $A$, denoted $\dim_H(A)$, is defined as 
\[ \dim_H(A) = \inf \left\{ s : \mathcal{H}^s(A) = 0   \right\} .  \] 
We refer the reader to \cite{Bishop_Peres, Evans_Gariepy, Falconer_1990} for excellent textbooks on the topic. 
Notice that $\mathcal{H}^0(A)$ equals the cardinality of $A$. 

In this article we focus on a particular subsystem of the system consisting of all measurable $n$-dBE-sets. 
In order to motivate this subsystem, fix $\mathbf{a}=(a_1,\ldots,a_n)\in A$ and notice that $A\subset \cup_{i=1}^{n} H_{i,a_i}^n$. 
Set $A_i:=A\cap H_{i,a_i}^n$, for $i\in [n]$. If $\mathcal{H}^0(A)=+\infty$, then it follows that 
there exists  $i\in [n]$ such that $\mathcal{H}^0(A_i)=+\infty$. 
We may assume, without loss of generality, that $i$ equals $n$. 
Now it is not difficult to see that $A_j=   \{\mathbf{a}\}$, for $j\neq n$  
and so $A\subset H^{n}_{n,a_n}$. 
Notice also that for every $j\in [n]\setminus \{n\}$ the function $\pi_j(\cdot)$ restricted to the set $A$ is injective.
Indeed, if $\mathbf{x}=(x_1,\ldots, x_n), \mathbf{y} = (y_1,\ldots,y_n)\in A$ are such that $\mathbf{x}\neq \mathbf{y}$ but $\pi_j(\mathbf{x})= \pi_j(\mathbf{y})$, then we have $x_n=y_n$ as well as $x_j=y_j$, 
which contradicts the hypothesis that $A$ is an $n$-dBE-set. In other words, the restriction of $\pi_j$ to $A$ is  invertible and its inverse is a function $\psi_j: \pi_j(A)\to [0,1]^n$ such that $\psi_j(\pi_j(A))=A$.  
Since all projections $\pi_l$, $l\neq n$, restricted to $A$ are injective, it follows that all component-functions (except the $n$th one) of $\psi_j$ are injective. 

We investigate   $n$-dBE-sets for which the corresponding component-functions are additionally "piecewise monotone" and are defined as follows. Here and later, the term \emph{monotone function} refers to a function which is either \emph{strictly increasing}, or \emph{strictly decreasing}.  \\

\begin{definition}[Piecewise monotone $n$-dBE-set]  
Let  $f:[0,1]\to [0,1]$ be an injective, measurable function. Then $f(\cdot)$ is referred to as \emph{piecewise monotone}, if there exists a countable partition of $[0,1]$ into measurable disjoint sets $\{S_j\}_{j\ge 1}$ such that for every $j$ the restriction of $f(\cdot)$ to $S_j$ is a monotone function. An $n$-dBE-set, $A$, is called \emph{piecewise monotone} if it is of the form 
\[A=\{(x,f_1(x),\ldots,f_{n-2}(x),\alpha): x\in [0,1]\}, \]
where $\alpha\in [0,1]$ is fixed and each function $f_i,i\in [n-2]$ is piecewise monotone.  
\end{definition}

It should be noted that without imposing this seemingly artificial restriction, little can be said about the Hausdorff dimension and the corresponding Hausdorff measure of an $n$-dBE-set. For example, the following, rather unexpected, result due to James Foran readily implies that the $1$-dimensional Hausdorff measure of a $3$-dBE-set  can be arbitrarily large. \\

\begin{theorem}[Foran \cite{Foran}]
For every positive real $K$,
there exists a measurable, injective function from $[0,1]$ to $[0,1]$ such that the $1$-dimensional Hausdorff measure of its graph is larger than $K$. 
\end{theorem}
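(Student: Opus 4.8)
The plan is to prove something formally stronger: for every $K$ it suffices to exhibit a single injective, measurable $f:[0,1]\to[0,1]$ whose graph $G_f$ has Hausdorff dimension strictly greater than $1$, since then $\mathcal{H}^1(G_f)=+\infty>K$. The observation that guides the construction is a \emph{negative} one, which pins down exactly where the difficulty must lie: if $f$ were piecewise monotone (in particular, piecewise linear), then $G_f$ would be a countable union of monotone arcs, and injectivity forces the $y$-projections of these arcs to be pairwise disjoint; hence the total variation is at most $\lambda([0,1])=1$, and $\mathcal{H}^1(G_f)\le 1+V(f)\le 2$. Thus no finite, and indeed no piecewise monotone, example can possibly work, and one is compelled to build a genuinely fractal graph.

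Accordingly, I would construct $G_f$ as a superset of the graph $\Gamma$ of a bijection $\phi\colon C\to D$ between two Cantor-type subsets $C,D\subset[0,1]$, engineered so that $\dim_H\Gamma>1$. The construction is iterative: starting from the diagonal, at each stage one replaces every straight piece by a steep injective zig-zag that oscillates between the top and the bottom of the current cell, and the newly created $y$-values are deposited into horizontal strips left vacant by the earlier stages, so that injectivity (the property that $\Gamma$ meets every horizontal line in at most one point) is preserved in the limit. Choosing the horizontal contraction to be stronger than the vertical one makes the limiting set self-affine of Bedford--McMullen type with a box-counting exponent $s>1$.

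To turn this into a rigorous lower bound on $\mathcal{H}^1$, I would place on $\Gamma$ the natural mass distribution $\mu$ obtained by pushing forward Lebesgue measure under the parametrisation, and verify a Frostman-type estimate $\mu(B(z,r))\le C\,r^{s}$ with $s>1$ for all small $r$; the mass distribution principle then gives $\mathcal{H}^{s}(\Gamma)\ge \mu(\Gamma)/C>0$, so $\dim_H\Gamma\ge s>1$ and $\mathcal{H}^1(\Gamma)=+\infty$. Finally, I would extend $\phi$ to an injective measurable function on all of $[0,1]$: the leftover sets $[0,1]\setminus C$ and $[0,1]\setminus D$ both have full Lebesgue measure, so a measurable injection of one into the other exists and yields $f$ with $G_f\supseteq\Gamma$, whence $\mathcal{H}^1(G_f)=+\infty>K$. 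The main obstacle is the simultaneous control of two competing requirements: keeping the map injective (functional in \emph{both} coordinates) through infinitely many refinement stages, which pushes toward few oscillations per strip, while forcing the widths to contract fast enough that the Frostman exponent exceeds $1$, which pushes toward many oscillations. Reconciling these by separating the same-strip overlaps only at finer and finer scales is the delicate heart of the argument.
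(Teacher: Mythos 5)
The first thing to note is that the paper does not prove this statement at all: it is quoted as a known result and attributed to \cite{Foran}, so there is no internal proof to compare yours against. The nearest relative inside the paper is Example~\ref{themis} (the construction suggested by Mattila), and your proposal is essentially an attempt at that: a planar Cantor-type set with both projections injective and Hausdorff dimension exceeding $1$. Your outer skeleton is sound, and the target is attainable. The reduction is correct: if $\Gamma\subset[0,1]^2$ meets every vertical and every horizontal line in at most one point and $\dim_H(\Gamma)>1$, then $\mathcal H^1(\Gamma)=+\infty$. The extension step is also correct: $[0,1]\setminus C$ and $[0,1]\setminus D$ are uncountable Borel sets, hence Borel isomorphic, and gluing $\phi$ with such an isomorphism stays injective because the two ranges are disjoint (your stated reason, full measure, is not the relevant one, but the conclusion holds).

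The genuine gap is the step you yourself call ``the delicate heart of the argument'', and it is not merely unfinished: the specific mechanism you propose for it provably cannot work. If a Bedford--McMullen-type self-affine construction is to keep both projections disjoint at every stage, then its $N$ level-one rectangles, of width $1/n$ and height $1/m$, must have pairwise disjoint $x$-projections and pairwise disjoint $y$-projections, forcing $N\le\min(n,m)$. The $N^k$ level-$k$ rectangles can then be covered by $N^k\bigl(\max(n,m)/\min(n,m)\bigr)^k\le\max(n,m)^k$ squares of side $\max(n,m)^{-k}$, so the upper box dimension of the attractor is at most $1$; McMullen's dimension formula gives the same answer. Hence no homogeneous pattern iterated self-affinely has ``box-counting exponent $s>1$'', and the Frostman estimate $\mu(B(z,r))\le Cr^{s}$ with $s>1$ that your plan rests on is false for every such set. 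What actually works --- and is exactly what Example~\ref{themis} does --- is an \emph{inhomogeneous} construction: at each finite stage the $y$-extents of the pieces overlap heavily, disjointness of the $y$-projections holds only for the limit set, the aspect ratios $\delta_n/\epsilon_{n-1}$ and $\epsilon_n/\delta_n$ degenerate from stage to stage, and the dimension lower bound is a scale-by-scale covering argument rather than a single self-affine mass estimate. Your closing phrase about ``separating the same-strip overlaps only at finer and finer scales'' points at precisely this repair, but it contradicts the self-affine framing that made your Frostman step look routine, and you do not carry it out. Two smaller defects: the measure on $\Gamma$ must be the push-forward of the natural Cantor measure on $C$ (which is Lebesgue-null), not of Lebesgue measure; and your motivational bound is wrong as stated --- injectivity does give $\sum_j\lambda(f(S_j))\le 1$, but the total variation $V(f)$ of a piecewise monotone injective $f$ can be infinite (interleave two monotone pieces whose images lie in two fixed disjoint bands), so $\mathcal H^1(G_f)\le 1+V(f)$ proves nothing; the correct bound $\mathcal H^1(G_f)\le 2$ is Theorem~\ref{thm:1}, whose proof needs Lemmas~\ref{lem:2partitions}--\ref{lem:monotoneCase}.
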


Moreover, the Hausdorff dimension of an $n$-dBE-set can also be arbitrarily large.
Indeed, as previously mentioned, such a set must be contained in the finite union of hyperplanes; thus, its dimension is at most $n-1$, and this bound can be attained, as illustrated in the following example. \\

\begin{example}\label{themis}
We briefly sketch a construction of a Cantor-type set in the plane whose projections are injective and whose Hausdorff dimension equals $2$.   
For any $0<s<2$ and $\delta>0$ small, let us say that a set has property $C(\delta,s)$ if for every covering by disks $B_i$ of diameter $\delta$, we have $\sum_i\text{diam}(B_i)^s\geq1$. Let now $\delta_n$ and $\epsilon_n$ be suitable positive sequences converging to zero, and consider thin vertical rectangles $R_{1,j}$ of dimensions $\delta_1\times\epsilon_0$, with $\delta_1<<\epsilon_0$, such that their projections on the $x$ axis are pairwise disjoint and the set $A_1:=\bigcup_jR_{1,j}$ has property $C(\delta_1,s)$. Subsequently, consider thin horizontal rectangles $R'_{1,j}$ of dimensions $\delta_1\times\epsilon_1$, with $\epsilon_1<<\delta_1$, inside $A_1$ such that their projections on the $y$ axis are pairwise disjoint and the set $A_1':=\bigcup_jR'_{1,j}$ has property $C(\delta_1,s)$ as well. We next choose thin vertical rectangles $R_{2,j}$ of dimensions $\delta_2\times\epsilon_1$ inside $A_1'$ such that their projections on the $x$ axis are pairwise disjoint and the set $A_2:=\bigcup_j R_{2,j}$ has property $C(\delta_2,s)$. Then, we take thin horizontal rectangles $R_{2,j}$ of dimensions $\delta_2\times\epsilon_2$ inside $A_2$ such that their projections on the $y$ axis are pairwise disjoint and the set $A_2':=\bigcup_j R_{2,j}$ has property $C(\delta_2,s)$. Continuing this Cantor-type construction, we obtain a decreasing sequence   
\[A_1\supset A_1'\supset A_2\supset A_2'\supset\cdots\]
of sets whose intersection $A$ has Hausdorff dimension at least $s$  
and such that the projections restricted to $A$ are injective. In fact, by replacing $s$ with $s_n$ in the steps above, where $s_n$ tends to $2$, the resulting set may have dimension exactly equal to $2$. The idea of this construction was suggested by P. Mattila (personal communication with the second author).
Now, performing an analogous construction on a hyperplane results in an $n$-dBE-set of dimension $n-1$. 
\end{example}

It also seems that a combination of Foran's theorem and Example~\ref{themis} produces $3$-dBE sets whose $2$-dimensional Hausdorff measure can be arbitrarily large. 
Bearing the above in mind, our main result on the Hausdorff dimension and the corresponding Hausdorff measure of \emph{piecewise monotone $n$-dBE-sets} reads as follows. \\

\begin{theorem}\label{thm:1} 
Let $A\subset [0,1]^n$ be a measurable, piecewise monotone $n$-dBE-set. Then $\dim_H(A)\le 1$ as well as
\[\mathcal{H}^1(A) \leq n-1. \] 
\end{theorem}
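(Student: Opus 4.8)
The plan is to regard $A$ as the image of the injective curve $\gamma(x)=(x,f_1(x),\ldots,f_{n-2}(x),\alpha)$, $x\in[0,1]$, and to exploit two features: the first coordinate is the identity, so $\gamma$ is a genuine graph, and the monotone-piece structure. First I would pass to a common refinement: intersecting the countable monotone partitions of the finitely many functions $f_1,\ldots,f_{n-2}$ yields a countable partition $[0,1]=\bigsqcup_k T_k$ into measurable sets on each of which \emph{every} $f_i$ is monotone; I write $A_k=\gamma(T_k)$, so that $A=\bigsqcup_k A_k$.

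For the dimension bound I would introduce, on each piece $T_k$, the sign vector $w_k=(1,\varepsilon_1^{(k)},\ldots,\varepsilon_{n-2}^{(k)})\in\{\pm1\}^{n-1}$ recording whether $f_i$ increases or decreases on $T_k$, and the function $q_k(x)=x+\sum_i\varepsilon_i^{(k)}f_i(x)$, which is strictly increasing on $T_k$. Since for $x<y$ in $T_k$ every signed increment is nonnegative, $\|\gamma(y)-\gamma(x)\|\le(y-x)+\sum_i|f_i(y)-f_i(x)|=q_k(y)-q_k(x)$; hence $\gamma|_{T_k}$ factors as a $1$-Lipschitz map of $Q_k:=q_k(T_k)\subseteq\mathbb{R}$ onto $A_k$. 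Consequently $\dim_H A_k\le\dim_H Q_k\le1$, and a countable union gives $\dim_H A\le1$. This step is routine and also yields $\mathcal{H}^1(A_k)\le\lambda^*(Q_k)$.

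For the measure bound the target is $\mathcal{H}^1(A)\le 1+\sum_{i=1}^{n-2}\lambda^*\!\big(f_i([0,1])\big)$, whose right-hand side is at most $n-1$ because each $f_i([0,1])\subseteq[0,1]$; geometrically the summands are the Lebesgue measures of the one-dimensional coordinate projections of $A$, the constant last coordinate contributing nothing. The heart is a clean per-piece estimate: covering $T_k$ by tiny domain intervals and using that each $f_i$ is monotone on $T_k$, the oscillation intervals of $f_i$ over these subintervals are pairwise disjoint in the range, so following the graph produces a cover of $A_k$ with $\sum\diam\le\lambda(T_k)+\sum_i\lambda^*(f_i(T_k))$. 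Care is needed at the countably many jumps of each monotone piece, which I would absorb by also subdividing in the range. Note that this is exactly the point where piecewise monotonicity is indispensable: by Foran's theorem no such estimate can hold for a general injective $f$.

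Summing over $k$, the first-coordinate contributions telescope exactly, $\sum_k\lambda(T_k)=1$. The crux, and the step I expect to be the main obstacle, is $\sum_k\lambda^*(f_i(T_k))\le1$ for each fixed $i$. The sets $f_i(T_k)$ are pairwise disjoint by injectivity and contained in $[0,1]$, so the bound would be immediate if they were Lebesgue measurable; but an injective monotone map need not send measurable sets to measurable sets (it may fail Luzin's property (N)), and for merely disjoint sets outer measure is only subadditive, so a naive sum of per-piece outer measures can overshoot $1$ — indeed one can rig an adversarial Cantor-type decomposition of a single monotone $f$ for which $\sum_k\lambda^*(f(T_k))>1$, even though $\mathcal{H}^1(A)$ itself stays below $n-1$. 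This shows one must \emph{not} sum $\mathcal{H}^1(A_k)$ blindly across the (possibly $\mathcal{H}^1$-nonmeasurable) pieces. I would resolve this by charging each range only once: using the Lusin--Souslin theorem the images of the Borel parts of the $T_k$ are Borel and disjoint, hence contribute at most $1$ in total, while the null parts are handled by carrying out the covering globally against the single set $f_i([0,1])$ rather than piece by piece. Making this organization of the cover rigorous, so that $\mathcal{H}^1(A)\le 1+\sum_i\lambda^*(f_i([0,1]))$ is obtained directly without invoking additivity of $\mathcal{H}^1$ over the pieces, is the technical core of the argument.
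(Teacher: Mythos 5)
Your first two steps are correct and essentially reproduce the paper's own route: the common refinement $\{T_k\}$ of the monotone partitions, and the per-piece bound obtained by factoring $\gamma|_{T_k}$ through the strictly increasing sum $q_k$ as a $1$-Lipschitz surjection, are exactly the content of Lemmas~\ref{lem:2partitions}--\ref{lem:monotoneCase} of the paper. Your dimension argument (countable stability of $\dim_H$ applied to the pieces $A_k$, each of finite $\mathcal H^1$) is also correct, and is in fact sturdier than the paper's, which deduces $\dim_H(A)\le 1$ from the measure bound. Most importantly, you have correctly located the crux: the paper's own proof performs precisely the step you warn against, asserting $\sum_{\mathbf j}\mathcal H^1(f_k(S_{\mathbf j}))=\mathcal H^1(f_k([0,1]))$, i.e.\ countable additivity of the outer measure $\mathcal H^1=\lambda^*$ over a disjoint family of possibly non-measurable images, with no justification; and your adversarial example (a monotone homeomorphism split along the preimage of a non-measurable set) shows this can genuinely fail.

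The gap is that your proposed repair --- Lusin--Souslin for the Borel parts plus a ``global covering'' for the null parts --- is never carried out, and it cannot be: with Lebesgue measurable pieces, the inequality $\mathcal H^1(A)\le 1+\sum_i\lambda^*(f_i([0,1]))$, and with it Theorem~\ref{thm:1} as stated, is false, so the null parts are not a technicality that any organization of the cover can absorb. Concretely: enumerate $\mathbb Q\cap[0,1]$ as $q_1,q_2,\dots$ and set $h(x)=\sum_{i:\,q_i\le x}2^{-i}$; then $h$ is strictly increasing and its image $Z$ is Lebesgue null, since the jumps of $h$ exhaust its variation. Hence every $T\subseteq[0,1]$ is the increasing image of the \emph{null} set $h(T)$ under $h^{-1}$. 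Fix $k$; let $R_j$, $j=1,\dots,k$, be the rows of a $k\times k$ grid, let $V_1,\dots,V_k\subseteq[0,1]\setminus\overline Z$ be pairwise disjoint sets of full outer measure (a standard transfinite construction), and put $T_{i,j}=V_i\cap\mathrm{int}(R_j)$, so that $\lambda^*(T_{i,j})=1/k$. Choose $k^2$ pairwise disjoint closed intervals $I_{i,j}\subset[0,1]$, let $N_{i,j}\subset\mathrm{int}(I_{i,j})$ be an increasing affine copy of $h(T_{i,j})$, and define $f$ on each $N_{i,j}$ to be the increasing bijection onto $T_{i,j}$ (the affine map followed by $h^{-1}$), and $f:=h$ off $\bigcup_{i,j}N_{i,j}$. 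Then $f$ is injective (the sets $T_{i,j}$ and $Z$ are pairwise disjoint), Lebesgue measurable (it agrees with the Borel function $h$ off a null set), and piecewise monotone exactly in the paper's sense: the finite partition consisting of the measurable sets $N_{i,j}$ and of $[0,1]\setminus\bigcup_{i,j}N_{i,j}$ has $f$ strictly increasing on every piece. Yet the graph of $f$ meets the $k^2$ disjoint open boxes $\mathrm{int}(I_{i,j})\times\mathrm{int}(R_j)$ in sets whose vertical projections are the $T_{i,j}$; since open sets are $\mathcal H^1$-measurable and projections are $1$-Lipschitz (Lemma~\ref{Lipschitz_lemma}), this forces $\mathcal H^1(\mathrm{graph}\,f)\ge k^2\cdot\frac1k=k$, while $1+\lambda^*(f([0,1]))\le 2$. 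Taking $A=\{(x,f(x),\alpha):x\in[0,1]\}$ gives a measurable, piecewise monotone $3$-dBE-set with $\mathcal H^1(A)$ arbitrarily large.

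So the measure bound can only be proved after strengthening the hypothesis, e.g.\ requiring the partition sets $S_j$ to be Borel (equivalently, requiring the images $f_i(S_j)$ to be measurable). Under that hypothesis your Lusin--Souslin observation supplies exactly the missing ingredient: a monotone injection on a Borel set is a Borel injection, so the images are disjoint Borel sets whose measures genuinely sum to at most $1$, and both your summation and the paper's close. As written, however, your proposal is incomplete at its self-declared ``technical core,'' that core is unfillable at the stated level of generality, and the same defect --- silently assumed additivity of $\mathcal H^1$ over disjoint, possibly non-measurable images --- invalidates the paper's own proof of Theorem~\ref{thm:1}.
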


We prove Theorem~\ref{thm:1} in Section~\ref{sec:2}.  
In Section~\ref{sec:4}, we show that the upper bound on the $1$-dimensional Hausdorff measure of an $n$-dBE-set, provided by Theorem \ref{thm:1}, is sharp. More precisely, we have the following. \\

\begin{theorem}\label{thm:3} 
There exists a measurable, piecewise monotone $n$-dBE-set, $A$, such that 
\[\mathcal{H}^1(A)=n-1.\] 
\end{theorem}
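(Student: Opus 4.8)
The plan is to exhibit an explicit curve of the required form and to compute its one-dimensional Hausdorff measure. Since Theorem~\ref{thm:1} already gives $\mathcal{H}^1(A)\le n-1$ for every piecewise monotone $n$-dBE-set, it suffices to construct a piecewise monotone $n$-dBE-set $A$ with $\mathcal{H}^1(A)\ge n-1$. Throughout I would fix $\alpha=0$ and seek injective functions $f_1,\dots,f_{n-2}$ so that the parametrization $\gamma:[0,1]\to\mathbb{R}^n$, $\gamma(x)=(x,f_1(x),\dots,f_{n-2}(x),0)$, is a continuous, rectifiable, injective curve whose length equals $n-1$; since $\gamma$ is injective this length is exactly $\mathcal{H}^1(A)$.

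The key idea is to take each $f_i$ to be a strictly increasing singular function -- a (generalized) devil's staircase -- and to arrange that the increments of distinct staircases, as well as the increment of the first coordinate $x$, are mutually singular, so that no two coordinates ever move together. Concretely, I would fix a base $b\ge n-1$ and, for each $i\in[n-2]$, choose a probability vector $p^{(i)}=(p^{(i)}_0,\dots,p^{(i)}_{b-1})$ with all entries strictly positive, none equal to the uniform value $1/b$, and with the profiles $p^{(1)},\dots,p^{(n-2)}$ pairwise distinct. Let $\mu_i$ be the Borel probability measure on $[0,1]$ obtained by declaring the base-$b$ digits of $x$ to be independent with common distribution $p^{(i)}$, and set $f_i(x)=\mu_i([0,x])$. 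Because every entry of $p^{(i)}$ is positive, $\mu_i$ is non-atomic with full support, so $f_i$ is continuous and strictly increasing, hence injective with $f_i([0,1])=[0,1]$; in particular $A$ is a genuine piecewise monotone $n$-dBE-set (distinct parameters give distinct first coordinates and distinct $f_i$-values, so the only agreement is in the last coordinate). Because $p^{(i)}$ is non-uniform, the strong law of large numbers shows that $\mu_i$ is concentrated on the set of $x$ whose base-$b$ digit frequencies equal $p^{(i)}$; these frequency sets are pairwise disjoint and all of Lebesgue measure zero, so $\lambda,\mu_1,\dots,\mu_{n-2}$ are pairwise mutually singular.

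With the construction in place, I would identify the length of $\gamma$ with the total variation of the $\mathbb{R}^n$-valued measure $D\gamma=(\lambda,\mu_1,\dots,\mu_{n-2},0)$ on $[0,1]$, using the classical fact that for a vector of monotone coordinate functions the length of the curve equals the total variation $|D\gamma|([0,1])$. Since $f_i'=0$ almost everywhere, the only absolutely continuous contribution comes from the first coordinate, and the crucial point is that all nonzero components $\lambda,\mu_1,\dots,\mu_{n-2}$ of $D\gamma$ are pairwise mutually singular. For a vector measure with mutually singular components the total variation is additive: choosing a Borel partition $[0,1]=B_0\sqcup B_1\sqcup\dots\sqcup B_{n-2}$ on which $\lambda,\mu_1,\dots,\mu_{n-2}$ are respectively concentrated gives $|D\gamma|([0,1])=\lambda([0,1])+\sum_{i=1}^{n-2}\mu_i([0,1])=1+(n-2)=n-1$. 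Hence $\mathcal{H}^1(A)=\mathrm{length}(\gamma)=n-1$, which together with Theorem~\ref{thm:1} yields the claim.

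I expect the construction of the measures to be routine, and the step requiring care to be the identification of the curve's length with $|D\gamma|([0,1])$ together with the additivity of total variation under mutual singularity: intuitively, along the full-measure set $B_0$ the curve advances only in the $x$-direction (contributing $1$), while each staircase $f_i$ performs its entire unit rise on the disjoint null set $B_i$ (contributing $1$), so motions in different coordinate directions never overlap and the lengths simply add. Since the concentration sets are dense, one cannot separate them by disjoint intervals, so the elementary version of this argument amounts to showing that for every $\varepsilon>0$ one can choose a partition of $[0,1]$ for which the cross terms $(\Delta f_i)(\Delta f_l)$ between distinct coordinates are negligible -- exactly what mutual singularity of the increments provides. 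This is the estimate I would write out in full detail.
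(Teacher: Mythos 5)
Your proposal is correct, and it takes a genuinely different route from the paper's. The paper also walks along devil's staircases, but it organizes everything around a single singular homeomorphism $h$: its extremal set is $\{(x,h(x),f_1(h(x)),\ldots,f_{n-3}(h(x)),\alpha)\colon x\in[0,1]\}$, where each $f_j$ is strictly increasing and carries a null subset $N_j$ of a prescribed full-measure set onto a set of measure one (this is Theorem~\ref{thm:4}, proved by a Cantor-scheme construction and applied recursively with $S_j=S_{j-1}\setminus N_{j-1}$); the lower bound then comes from cutting the parameter interval into $n-1$ disjoint sets ($\{h'=0\}$, $Q_1$, $W_1,\ldots,W_{n-3}$) whose corresponding arcs project, under the $1$-Lipschitz coordinate projections, onto full-measure subsets of $n-1$ distinct axes, so each arc has $\mathcal{H}^1\ge 1$ by Lemma~\ref{Lipschitz_lemma}; also, the case $n=3$ is treated separately. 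You realize the same underlying phenomenon---each nonconstant coordinate performs its entire unit of variation on a parameter set that is null for all the others---uniformly in $n$, without composition and without Theorem~\ref{thm:4}: mutual singularity of $\lambda,\mu_1,\ldots,\mu_{n-2}$ comes for free from the strong law of large numbers once the digit distributions are non-uniform and pairwise distinct (Besicovitch--Eggleston sets), and $\mathcal{H}^1(A)$ is computed exactly via the chain (length of a simple arc) $=$ (total variation of the vector Stieltjes measure) $=$ (sum of masses of mutually singular components) $=n-1$. The one step you defer is indeed the crux, but it goes through exactly as you sketch: by inner regularity choose disjoint compact sets $K_0\subset B_0,\ldots,K_{n-2}\subset B_{n-2}$ carrying mass at least $1-\varepsilon$ of $\lambda,\mu_1,\ldots,\mu_{n-2}$ respectively; any interval partition of $[0,1]$ with mesh smaller than the minimal distance between the $K_i$ has each interval meeting at most one $K_i$, whence $\sum_k|\gamma(t_{k+1})-\gamma(t_k)|\ge(n-1)(1-\varepsilon)$ and no cross terms survive. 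Comparing the two: your route bypasses Theorem~\ref{thm:4} and its Cantor scheme entirely, treats all coordinates symmetrically with no recursion, and yields the exact value of $\mathcal{H}^1(A)$ directly, at the cost of invoking curve-length and vector-measure facts the paper never needs; the paper's route is more self-contained (only Lemma~\ref{Lipschitz_lemma} and elementary measure theory) and produces Theorem~\ref{thm:4} as a by-product of independent interest. The two are in fact interchangeable at the final step: applying the paper's projection argument to your concentration sets $B_i$, using that $\lambda(f_i(B_i))=\mu_i(B_i)=1$ because $f_i$ is the distribution function of $\mu_i$, gives $\mathcal{H}^1(A)\ge n-1$ with no curve-length theory at all, so your simpler construction can also be verified with the paper's elementary toolkit.
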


The set constructed in the proof of Theorem~\ref{thm:3} is a piecewise monotone $n$-dBE-set, $A$, of the form
\[ A := \{(x,h(x), f_1(h(x)), \ldots, f_{n-3}(h(x)), \alpha): x\in [0,1]\} , \]
where $h$ is a \emph{singular function} (i.e., continuous, strictly increasing, having derivative zero almost everywhre) and each $f_i,i=1,\ldots,n-3$, is a strictly increasing function from $[0,1]$ to $[0,1]$ that maps a particular set of measure zero to a set of measure one. The existence of the latter functions is guaranteed by the following result, which may be of independent interest. \\

\begin{theorem}\label{thm:4}
For every $S\subset [0,1]$ with $\lambda(S)=1$ there exists a strictly increasing function $f:[0,1]\to[0,1]$ and $N\subset S$ with $\lambda(N)=0$ such that $\lambda(f(N))=1$. 
\end{theorem}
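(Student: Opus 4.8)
The plan is to obtain $f$ as the cumulative distribution function of a carefully chosen measure and to exploit the fact that such a function transports its own measure onto Lebesgue measure. Concretely, suppose we can construct a non-atomic Borel probability measure $\mu$ on $[0,1]$ with full support and with $\mu(N)=1$ for some Borel set $N\subset S$ satisfying $\lambda(N)=0$. Define $f(x):=\mu([0,x])$. Non-atomicity of $\mu$ makes $f$ continuous, full support of $\mu$ makes $f$ strictly increasing, and $f(0)=0$, $f(1)=1$, so $f:[0,1]\to[0,1]$ is a strictly increasing continuous bijection (and, since $\mu\perp\lambda$, a singular one, in keeping with the devil's-staircase theme of the paper). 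A direct computation of the distribution function of the pushforward $f_*\mu$ gives, for every $y\in[0,1]$, that $f_*\mu([0,y])=\mu(f^{-1}([0,y]))=\mu([0,f^{-1}(y)])=f(f^{-1}(y))=y$, so $f_*\mu=\lambda$. As $f$ is injective, $f^{-1}(f(N))=N$, whence $\lambda(f(N))=f_*\mu(f(N))=\mu(N)=1$, which is exactly the desired conclusion. Here $N$ will be an $F_\sigma$ set and $f$ continuous, so $f(N)$ is $F_\sigma$ and in particular measurable.

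It therefore remains to build $\mu$ and $N$. Fix an enumeration $\{I_k\}_{k\ge 1}$ of all subintervals of $[0,1]$ with rational endpoints. The key subconstruction, discussed below, produces for each $k$ a Cantor set (compact, perfect, nowhere dense) $C_k\subset S\cap I_k$ with $\lambda(C_k)=0$; let $\mu_k$ be any non-atomic Borel probability measure on $C_k$. Put $N:=\bigcup_k C_k$ and $\mu:=\sum_k 2^{-k}\mu_k$. Then $N\subset S$ is $F_\sigma$ with $\lambda(N)=0$, being a countable union of null sets; $\mu$ is a Borel probability measure, non-atomic because each $\mu_k$ is, and concentrated on $N$ so that $\mu(N)=1$; and $\mu$ has full support, since any open interval $J$ contains some $I_k\supset C_k$, giving $\mu(J)\ge 2^{-k}\mu_k(C_k)=2^{-k}>0$. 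These are precisely the properties required in the reduction.

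The \emph{main obstacle} is the subconstruction itself: every set of positive Lebesgue measure contains a measure-zero Cantor set. Applied to $S\cap I_k$, which has positive measure because $\lambda(S)=1$, this yields $C_k$. I would argue as follows. By inner regularity pick a compact $K\subset S\cap I_k$ with $\lambda(K)>0$; almost every point of $K$ is a Lebesgue density point of $K$. I then build a Cantor scheme of nested closed intervals $\{J_s:s\in\{0,1\}^{<\omega}\}$: given a node interval $J_s$ with $\lambda(K\cap J_s)>0$, choose two distinct density points of $K$ in its interior and enclose them in disjoint closed subintervals $J_{s0},J_{s1}\subset J_s$ short enough that the total length at level $m$ is at most $2^{-m}$; the density property guarantees $\lambda(K\cap J_{si})>0$, so the recursion continues. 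The set $C_k:=\bigcap_m\bigcup_{|s|=m}J_s$ is then a Cantor set with $\lambda(C_k)\le\lim_m 2^{-m}=0$, and because $K$ is closed while each branch is a nested sequence of $K$-meeting intervals of diameter tending to $0$, every point of $C_k$ is a limit of points of $K$ and hence lies in $K\subset S$. This density-point bookkeeping, which simultaneously keeps the pieces perfect, inside $K$, and of vanishing total length, is the only genuinely delicate point; the remaining measure-theoretic steps are routine.
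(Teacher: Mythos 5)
Your proposal is correct, and while the underlying geometric skeleton coincides with the paper's (an enumeration of rational subintervals $I_k$, a measure-zero Cantor set inside $S\cap I_k$ for each $k$, and geometric weights $2^{-k}$ to glue the pieces), the way you extract the conclusion is genuinely different. The paper works at the level of functions: for each $I_n$ it builds, via an order-preserving homeomorphism onto the ternary Cantor set composed with the devil's staircase, a non-decreasing $g_n$ that rises precisely on the null set $N_{I_n}$, sets $f=\sum_n 2^{-(n+1)}g_n$, and then proves $\lambda(f(N_{I_m}))\ge 2^{-(m+1)}$ by a variation-bookkeeping argument comparing $f$ with $f^{(m)}=f-2^{-(m+1)}g_m$; this step needs the extra fact that $g_m$ is constant on the components of $[0,1]\setminus N_{I_m}$, which is not recorded in the statement of the paper's Lemma~\ref{claim:one_interval} and has to be recovered from its proof. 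You instead make the measure the primary object: $\mu=\sum_k 2^{-k}\mu_k$ with $\mu_k$ non-atomic on $C_k$, and $f$ its distribution function, so that the transport identity $f_*\mu=\lambda$ (checked on intervals $[0,y]$ and extended by uniqueness of Borel measures) gives $\lambda(f(N))=\mu(f^{-1}(f(N)))=\mu(N)=1$ in one line, with measurability of $f(N)$ handled by the $F_\sigma$ observation. What your route buys is exactly the elimination of the per-piece variation argument and of the awkward back-reference to the lemma's construction; what the paper's route buys is self-containedness, since it never invokes pushforward measures or the existence of non-atomic measures on Cantor sets, building everything by hand from the devil's staircase. Two minor remarks: your Cantor-scheme subconstruction via Lebesgue density points is correct but heavier than necessary (the paper gets by with inner regularity plus the perfect set property, since perfectness alone guarantees each node interval meets the ambient closed set in infinitely many points), and your appeal to ``any non-atomic Borel probability measure on $C_k$'' is legitimate but deserves a word, e.g.\ pushing forward the coin-flipping measure under the homeomorphism $2^\omega\cong C_k$ furnished by your scheme.
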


\section{Proof of Theorem~\ref{thm:1}}\label{sec:2}

In this section we prove Theorem \ref{thm:1}.
We begin with the case of $2$-dBE-sets, which is rather trivial.  \\

\begin{theorem}
Let $A$ be a $2$-dBE-set. Then $\mathcal{H}^1(A)\le 1$. 
\end{theorem}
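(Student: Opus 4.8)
The plan is to prove the stronger structural statement that every $2$-dBE-set is contained in a single axis-parallel line; the bound $\mathcal{H}^1(A)\le 1$ is then immediate, since the intersection of a vertical line $\{a\}\times[0,1]$ or a horizontal line $[0,1]\times\{b\}$ with the square $[0,1]^2$ is a segment of length $1$. First I would discard the trivial case $|A|\le 1$, for which $\mathcal{H}^1(A)=0$, and assume $A$ contains at least two distinct points. The whole argument then hinges on a single dichotomy: whether or not the projection $\pi_1$ onto the first coordinate is injective on $A$.

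Suppose first that $\pi_1$ is injective on $A$, so that no two distinct points of $A$ share their first coordinate. Given any two distinct $\mathbf{x}=(x_1,x_2)$ and $\mathbf{y}=(y_1,y_2)$ in $A$, the defining property supplies a \emph{unique} index $i$ with $x_i=y_i$; since $x_1\neq y_1$, this index must be $i=2$, whence $x_2=y_2$. As this holds for every pair of distinct points, all of $A$ shares a common second coordinate $b$, so $A\subseteq[0,1]\times\{b\}$ and therefore $\mathcal{H}^1(A)\le 1$.

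Now suppose $\pi_1$ is not injective on $A$. Then there exist distinct $\mathbf{x},\mathbf{y}\in A$ with $x_1=y_1=:a$, and uniqueness forces $x_2\neq y_2$. I would then show that every point of $A$ lies on the vertical line $x_1=a$. Indeed, for an arbitrary $\mathbf{z}=(z_1,z_2)\in A$ with $z_1\neq a$, comparing $\mathbf{z}$ with $\mathbf{x}$ rules out a match in the first coordinate, forcing $z_2=x_2$, while comparing $\mathbf{z}$ with $\mathbf{y}$ forces $z_2=y_2$, contradicting $x_2\neq y_2$. Hence $z_1=a$ for every $\mathbf{z}\in A$, so $A\subseteq\{a\}\times[0,1]$ and again $\mathcal{H}^1(A)\le 1$.

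I do not expect any genuine obstacle here: the entire content is the elementary combinatorial observation that a point lying off a line which already carries two points differing in exactly one coordinate cannot satisfy the ``unique shared coordinate'' condition with both of them simultaneously. The only point requiring care is the consistent use of the word \emph{unique} (exactly one shared coordinate, not at least one), which is precisely what drives the contradiction in the second case and pins $A$ down to a single line.
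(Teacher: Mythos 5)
Your proof is correct and takes essentially the same approach as the paper: both arguments show that a $2$-dBE-set with at least two points must be contained in a single axis-parallel line, from which $\mathcal{H}^1(A)\le 1$ is immediate. Your dichotomy on the injectivity of $\pi_1$ merely makes explicit the paper's ``without loss of generality'' step and its unproved claim that every further point of $A$ must lie on the line $\ell$, which you justify by exactly the intended two-comparison contradiction.
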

\begin{proof}
Let $\mathbf{x}=(x_1,x_2),\mathbf{y}=(y_1,y_2)\in A$ be distinct and notice that the hypothesis implies that either $x_1=y_1$ or $x_2=y_2$, but not both. Without loss of generality, assume that $x_1=y_1$. Hence both $\mathbf{x},\mathbf{y}$ belong to the line $\ell:=\{(x_1,\alpha): \alpha\in [0,1]\}$. Now notice that every other point from $A$ must belong to the line $\ell$ as well and thus $A\subset \ell$. The result follows. 
\end{proof}

So, from now on, we may assume  that $n\ge 3$.  
The proof of Theorem~\ref{thm:1} requires some definitions. 
We say that a family $\mathcal F$ of pairwise disjoint subsets of the real line is \emph{left-right ordered} if for every two distinct nonempty sets $F_1,F_2\in\mathcal F$ we have either $\sup F_1\le\inf F_2$ or $\inf F_1\ge\sup F_2$. We say that $\mathcal F$ is a \emph{left-right ordered partition} of a subset $A$ of the real line if it is a left-right ordered family as well as a partition of $A$.

It is well known (see \cite[p. 4]{Bishop_Peres}) that for every subset $A$ of the real line and for every $\delta,\varepsilon>0$ there is a cover $\{U_i\}_{i=1}^\infty$ of $A$ consisting of convex sets (i.e. intervals) such that the diameter of each $U_i$ is at most $\delta$ and such that  $\sum_{i}\diam(U_i)\le\mathcal{H}_{\delta}^{1}(A)+\varepsilon$. If for every $i$ we define $V_i:=(U_i\setminus\bigcup_{j<i}U_j)\cap A$ then it is clear that $\{V_i\}$ is a partition of $A$ such that the diameter of each $V_i$ is at most $\delta$. It follows that for every subset $A$ of the real line and for every $\delta,\varepsilon>0$ we have
\[ \mathcal{H}_{\delta}^{1}(A) = \inf\left\{\sum_{i}\diam(V_i)\right\}
\]
where the infimum is taken over all at most countable left-right ordered partitions $\{V_i\}$ of $A$ such that the diameter of each $V_i$ is at most $\delta$.

Recall that a function 
$f:F \subset \mathbb{R}^n\to\mathbb{R}^m$ is 
\emph{Lipschitz with constant $c$} if 
\[ |f(\mathbf{x})-f(\mathbf{y})| \leq c\cdot |\mathbf{x}-\mathbf{y}|  \; \text{for all} \; \mathbf{x},\mathbf{y}\in F . \]
The following result is well known (see \cite[p. 24]{Falconer_1990}). \\

\begin{lemma}\label{Lipschitz_lemma}
Fix positive integers $n,m$ and let $F\subset \mathbb{R}^n$. If $f: F\to\mathbb{R}^m$ is a Lipschitz function with constant $c$ then $\mathcal{H}^s(f(F))\leq c^s \mathcal{H}^s(F)$.
\end{lemma}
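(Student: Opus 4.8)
The plan is to transfer any economical cover of $F$ into a cover of $f(F)$ of comparable total diameter, exploiting the elementary fact that a Lipschitz map shrinks diameters by at most the factor $c$. First I would record the observation that for any set $U\subset\mathbb{R}^n$ the image $f(U\cap F)$ satisfies $\diam(f(U\cap F))\le c\cdot\diam(U)$. Indeed, for any two points $\mathbf{x},\mathbf{y}\in U\cap F$ the Lipschitz condition gives $|f(\mathbf{x})-f(\mathbf{y})|\le c\cdot|\mathbf{x}-\mathbf{y}|\le c\cdot\diam(U)$, and taking the supremum over all such pairs yields the claimed bound.

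Next, I would fix $\delta>0$ and let $\{U_i\}$ be an arbitrary cover of $F$ by sets of diameter at most $\delta$. Since $F\subset\bigcup_i U_i$, we have $f(F)\subset\bigcup_i f(U_i\cap F)$, so the family $\{f(U_i\cap F)\}$ is a cover of $f(F)$, and by the diameter estimate above each of its members has diameter at most $c\delta$. Consequently this family is admissible in the definition of $\mathcal{H}^s_{c\delta}(f(F))$, and we obtain
\[ \mathcal{H}^s_{c\delta}(f(F))\le\sum_i\diam(f(U_i\cap F))^s\le c^s\sum_i\diam(U_i)^s. \]
Taking the infimum over all covers $\{U_i\}$ of $F$ with mesh at most $\delta$ then gives $\mathcal{H}^s_{c\delta}(f(F))\le c^s\cdot\mathcal{H}^s_\delta(F)$.

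Finally I would let $\delta\to 0_+$. Assuming $c>0$, the threshold $c\delta$ also tends to zero, so passing to the limit on both sides yields $\mathcal{H}^s(f(F))\le c^s\cdot\mathcal{H}^s(F)$, as desired; the degenerate case $c=0$ (where $f$ is constant and $f(F)$ is a single point) is handled trivially for $s>0$. There is no genuine obstacle in this argument: the only two points demanding a little care are the restriction to $U_i\cap F$, which is needed because $f$ is defined only on $F$ and so the Lipschitz bound applies only there, and the correct bookkeeping of the mesh parameter, matching $c\delta$ on the image side with $\delta$ on the domain side before taking the limit.
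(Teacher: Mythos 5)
Your proof is correct: it is the standard diameter-scaling argument, handling the mesh bookkeeping ($\delta$ on the domain side versus $c\delta$ on the image side) and the degenerate case $c=0$ properly. The paper itself offers no proof of this lemma, citing it as well known from Falconer's book, and your argument is essentially the one given there, so there is nothing to compare beyond noting that you have filled in the standard details correctly.
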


The proof of Theorem~\ref{thm:1} is based upon the following lemmata. \\

\begin{lemma}\label{lem:2partitions}
Let $A$ be a subset of the real line and let $\delta$ be a positive real number. Suppose that $\{V_i\}_i,\{W_j\}_j$ are at most countable left-right ordered partitions of $A$. Then $\{V_i\cap W_j\}_{i,j}$ is an at most countable left-right ordered partition of $A$ and $$\sum_{i,j}\diam(V_i\cap W_j)\le\min\left\{\sum_i\diam(V_i),\sum_j\diam(W_j)\right\}.$$
\end{lemma}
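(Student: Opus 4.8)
The plan is to verify the structural claim and the inequality separately, the latter reducing to a one-dimensional packing estimate.

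For the structural claim, countability is immediate since the index set $\{(i,j)\}$ is a countable product. That $\{V_i\cap W_j\}_{i,j}$ partitions $A$ follows because each $x\in A$ lies in exactly one $V_i$ and exactly one $W_j$, hence in exactly one $V_i\cap W_j$, while distinct pairs yield disjoint intersections. To check the left-right ordering I would take two distinct nonempty members $V_i\cap W_j$ and $V_{i'}\cap W_{j'}$ and argue by cases. If $i\neq i'$, the left-right ordering of $\{V_i\}$ gives, say, $\sup V_i\le\inf V_{i'}$; since $V_i\cap W_j\subseteq V_i$ and $V_{i'}\cap W_{j'}\subseteq V_{i'}$, this passes to $\sup(V_i\cap W_j)\le\inf(V_{i'}\cap W_{j'})$. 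If $i=i'$ then $j\neq j'$, and the same reasoning applied to $\{W_j\}$ works. The underlying principle used here, namely that passing to subsets preserves left-right ordering, I would record once and reuse.

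For the inequality, by symmetry it suffices to prove $\sum_{i,j}\diam(V_i\cap W_j)\le\sum_i\diam(V_i)$. If the right-hand side is infinite there is nothing to prove, so I may assume each $V_i$ is bounded. Writing the double sum as $\sum_i\bigl(\sum_j\diam(V_i\cap W_j)\bigr)$, I would fix $i$ and observe that $\{V_i\cap W_j\}_j$ is a left-right ordered partition of $V_i$ (by the subset-stability just noted, together with the fact that $\{W_j\}$ partitions $A\supseteq V_i$). Thus the whole estimate reduces to the following one-variable statement: if $\{S_k\}$ is a left-right ordered partition of a bounded set $S\subseteq\mathbb{R}$, then $\sum_k\diam(S_k)\le\diam(S)$.

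This last estimate is the heart of the argument, and I would prove it using Lebesgue measure. For each nonempty $S_k$ write $I_k:=[\inf S_k,\sup S_k]$, so that $\diam(S_k)=\lambda(I_k)$. Since $S_k\subseteq S$ we have $I_k\subseteq[\inf S,\sup S]=:I$, and the left-right ordering forces the intervals $I_k$ to have pairwise disjoint interiors, meeting at most at endpoints. Hence $\bigcup_k I_k\subseteq I$ with overlaps of measure zero, and countable additivity yields $\sum_k\diam(S_k)=\sum_k\lambda(I_k)=\lambda(\bigcup_k I_k)\le\lambda(I)=\diam(S)$. Summing over $i$ gives the desired bound by $\sum_i\diam(V_i)$, and the symmetric argument gives the bound by $\sum_j\diam(W_j)$, which completes the proof. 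The only point demanding care is the passage from ``pairwise disjoint interiors'' to additivity of lengths, where one must account for the countably many shared endpoints, a set of measure zero, so that the measure of the union genuinely equals the sum of the lengths.
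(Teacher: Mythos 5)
Your proof is correct, and its overall skeleton matches the paper's: both reduce the inequality (after the symmetry observation) to showing that, for each fixed $i$, the family $\{V_i\cap W_j\}_j$ is a left-right ordered partition of $V_i$ whose diameters sum to at most $\diam(V_i)$. Where you genuinely diverge is in how this core one-dimensional estimate is proved. The paper argues elementarily: fix $\varepsilon>0$, pass to a finite subfamily $W_{j_1},\dots,W_{j_n}$ capturing all but $\varepsilon$ of the sum, pick near-extremal point pairs $x_k,y_k\in V_i\cap W_{j_k}$, use the left-right ordering to arrange $x_1\le y_1\le x_2\le\cdots\le x_n\le y_n$, and telescope to get $\sum_k(y_k-x_k)\le y_n-x_1\le\diam(V_i)$, with $O(\varepsilon)$ error terms that vanish in the limit. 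You instead pass to convex hulls $I_k=[\inf S_k,\sup S_k]$, note that left-right ordering makes their interiors pairwise disjoint, and invoke countable additivity of Lebesgue measure, correctly disposing of the countably many shared endpoints as a null set. Your route is slicker and avoids all epsilon bookkeeping and the finite-truncation step, at the cost of importing Lebesgue measure (harmless in this paper, since $\lambda=\mathcal H^1$ on the line anyway); the paper's argument is longer but entirely self-contained, using nothing beyond the definition of diameter. You also spell out the verification that $\{V_i\cap W_j\}_{i,j}$ is a left-right ordered partition, including the subset-stability principle, which the paper dismisses as trivial; that is a matter of thoroughness rather than of approach. Both proofs are complete and correct.
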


\begin{proof}
The fact that $\{V_i\cap W_j\}_{i,j}$ is an at most countable left-right ordered partition of $A$ is trivial. We will show that for every $i$ it holds
$$\sum_j\diam(V_i\cap W_j)\le\diam(V_i),$$
then the assertion easily follows. To this end, fix an index $i$ as well as $\varepsilon>0$, and find a finite set $F=\{j_1,\ldots,j_n\}$ of indices $j$ such that
$$\sum_j\diam(V_i\cap W_j)\le\sum_{k=1}^n\diam(V_i\cap W_{j_k})+\varepsilon.$$
For every $k=1,\ldots,n$, find $x_k,y_k\in V_i\cap W_{j_k}$ such that
$$\diam(V_i\cap W_{j_k})\le|x_k-y_k|+\frac\varepsilon n.$$
Without loss of generality suppose that $x_1\le y_1\le x_2\le\ldots\le y_{n-1}\le x_n\le y_n$.
Then
$$\sum_j\diam(V_i\cap W_j)\le\sum_{k=1}^n\diam(V_i\cap W_{j_k})+\varepsilon\le\sum_{k=1}^n\left(y_k-x_k+\frac\varepsilon n\right)+\varepsilon$$
$$=\sum_{k=1}^n(y_k-x_k)+2\varepsilon\le y_n-x_1+2\varepsilon\le\diam(V_i)+2\varepsilon.$$
As this holds for every $\varepsilon>0$, the proof is finished.
\end{proof}

\begin{lemma}\label{lem:measureOfTheSum}
Suppose that $f_0,f_1,\ldots,f_m$ are strictly increasing real functions, all of them defined on a given subset $D$ of the real line. Then
$$\mathcal H^1\left(\left\{\sum_{k=0}^mf_k(x)\colon x\in D\right\}\right)\le\sum_{k=0}^m\mathcal H^1\left(f_k(D)\right).$$
\end{lemma}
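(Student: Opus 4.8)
The plan is to exploit that the map $g:=\sum_{k=0}^m f_k$ is itself strictly increasing, hence an order-preserving bijection from $D$ onto the sum set, and to estimate the $\mathcal{H}^1_\delta$-content of $g(D)$ through the left-right ordered partition characterization recorded above. The entire argument rests on one elementary monotonicity inequality: for any $V\subseteq D$ and any $x\le y$ in $V$ we have $g(y)-g(x)=\sum_k\bigl(f_k(y)-f_k(x)\bigr)$ with every summand nonnegative, so taking suprema over $x,y\in V$ gives $\diam(g(V))\le\sum_{k=0}^m\diam(f_k(V))$.

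First I would fix $\delta,\varepsilon>0$, set $\delta':=\delta/(m+1)$, and for each $k$ choose, via the partition characterization of $\mathcal{H}^1_{\delta'}$, an at most countable left-right ordered partition $\{P^{(k)}_i\}_i$ of $f_k(D)$ with $\diam(P^{(k)}_i)\le\delta'$ and $\sum_i\diam(P^{(k)}_i)\le\mathcal{H}^1_{\delta'}(f_k(D))+\varepsilon/(m+1)$. Pulling each partition back through the strictly increasing bijection $f_k$ yields a left-right ordered partition $\{f_k^{-1}(P^{(k)}_i)\}_i$ of $D$; I then form the common refinement $\mathcal R$ of these $m+1$ partitions, which by finitely many applications of Lemma~\ref{lem:2partitions} is again an at most countable left-right ordered partition of $D$.

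Next I would push $\mathcal R$ forward by $g$. For $R\in\mathcal R$, since $R$ lies inside a single set $f_k^{-1}(P^{(k)}_{i_k})$ for each $k$, the monotonicity inequality gives $\diam(g(R))\le\sum_k\diam(f_k(R))\le\sum_k\diam(P^{(k)}_{i_k})\le(m+1)\delta'=\delta$, so $\{g(R)\}_R$ is an admissible left-right ordered partition of $g(D)$ for computing $\mathcal{H}^1_\delta$. Summing, $\sum_R\diam(g(R))\le\sum_{k=0}^m\sum_R\diam(f_k(R))$, and for each fixed $k$ the family $\{f_k(R)\}_R$ refines $\{P^{(k)}_i\}_i$; applying Lemma~\ref{lem:2partitions} to these two partitions of $f_k(D)$ (their common refinement collapses to $\{f_k(R)\}_R$) yields $\sum_R\diam(f_k(R))\le\sum_i\diam(P^{(k)}_i)$. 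Combining the estimates, $\mathcal{H}^1_\delta(g(D))\le\sum_R\diam(g(R))\le\sum_k\mathcal{H}^1_{\delta'}(f_k(D))+\varepsilon\le\sum_k\mathcal{H}^1(f_k(D))+\varepsilon$; letting $\varepsilon\to0$ and then $\delta\to0$ finishes the proof.

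The main obstacle is the bookkeeping required to control the individual diameters and the total sum at the same time. The monotonicity inequality forces the factor $(m+1)$ in the diameter bound, which is exactly why I pre-shrink to $\delta'=\delta/(m+1)$ so that the images $g(R)$ stay below $\delta$; and the crucial passage $\sum_R\diam(f_k(R))\le\sum_i\diam(P^{(k)}_i)$ rests on the fact that a left-right ordered refinement never increases the total diameter, which is precisely the content hidden inside the proof of Lemma~\ref{lem:2partitions}. Once these two points are in place, everything else is routine.
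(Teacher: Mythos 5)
Your proof is correct and follows essentially the same route as the paper's: pull back left-right ordered partitions through the strictly increasing maps, form the common refinement, push it forward by the sum, and control the totals via Lemma~\ref{lem:2partitions}. The only differences are cosmetic bookkeeping: the paper reduces to $m=1$ by induction and accepts covers of diameter $2\delta$ (proving $\mathcal H^1_{2\delta}((f_1+f_2)(D))\le\mathcal H^1_\delta(f_1(D))+\mathcal H^1_\delta(f_2(D))$), whereas you treat all $m+1$ functions at once and pre-shrink to $\delta'=\delta/(m+1)$.
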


\begin{proof}
We may assume that $m=1$ (the general case follows by induction on $m$).
Then it clearly suffices to show that for every $\delta>0$ it holds
$$\mathcal H^1_{2\delta}\left((f_1+f_2)(D)\right)\le\mathcal H^1_\delta\left(f_1(D)\right)+\mathcal H^1_\delta\left(f_2(D)\right).$$
So fix $\delta>0$ and $\varepsilon>0$ and find at most countable left-right ordered partitions $\{V^1_i\}_i$ of $f_1(D)$ and $\{V^2_j\}_j$ of $f_2(D)$ such that the diameter of each $V^1_i$ and of each $V^2_j$ is at most $\delta$, and such that we have $$\sum_i\diam(V^1_i)\le\mathcal H^1_{\delta}(f_1(D))+\varepsilon$$
and
$$\sum_j\diam(V^2_j)\le\mathcal H^1_{\delta}(f_2(D))+\varepsilon.$$
We define $\mathcal F:=\{f_1^{-1}(V^1_i)\cap f_2^{-1}(V^2_j)\}_{i,j}$.
The fact that both functions $f_1$ and $f_2$ are increasing clearly gives us that $\mathcal F_1:=\{f_1(F)\colon F\in\mathcal F\}$ is an at most countable left-right ordered partition of $f_1(D)$. Moreover, Lemma \ref{lem:2partitions} applied to the partitions $\{V^1_i\}_i$ and $\mathcal F_1$ gives us that the partition $\mathcal F_1$ satisfies
$$\sum_{F\in\mathcal F_1}\diam(F)\le\sum_i\diam(V^1_i)\le\mathcal H^1_\delta(f_1(D))+\varepsilon.$$
Similarly $\mathcal F_2:=\{f_2(F)\colon F\in\mathcal F\}$ is an at most countable left-right ordered partition of $f_2(D)$ and
$$\sum_{F\in\mathcal F_2}\diam(F)\le\mathcal H^1_\delta(f_2(D))+\varepsilon.$$
It remains to observe that the family $\{(f_1+f_2)(F)\colon F\in\mathcal F\}$ is a cover of $(f_1+f_2)(D)$ (in fact, it is a left-right ordered partition) consisting of sets of diameter at most $2\delta$. Therefore
$$\mathcal H^1_{2\delta}((f_1+f_2)(D))\le\sum_{F\in\mathcal F}\diam((f_1+f_2)(F))=\sum_{F\in\mathcal F}\diam(f_1(F))+\sum_{F\in\mathcal F}\diam(f_2(F))$$
$$\le\mathcal H^1_\delta(f_1(D))+\mathcal H^1_\delta(f_2(D))+2\varepsilon.$$
As this holds for every $\varepsilon>0$, the proof is completed.
\end{proof}

\begin{lemma}\label{lem:monotoneCase}
Suppose that $f_0,f_1,\ldots,f_m$ are monotone real functions, all of them defined on a given subset $D$ of the real line.
Denote $G:=\{(f_0(x),f_1(x),\ldots,f_m(x))\colon x\in D\}$.
Then
$$\mathcal H^1(G)\le\sum_{k=0}^m\mathcal H^1(f_k(D)).$$
\end{lemma}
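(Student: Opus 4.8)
The plan is to reduce the multidimensional statement to the one-dimensional inequality of Lemma~\ref{lem:measureOfTheSum} by exhibiting a $1$-Lipschitz map onto $G$ from the associated ``sum set'' and then invoking the Lipschitz estimate of Lemma~\ref{Lipschitz_lemma}.

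First I would dispose of the distinction between increasing and decreasing functions. Since each $f_k$ is monotone, I replace $f_k$ by $-f_k$ whenever $f_k$ is strictly decreasing; this turns every coordinate into a strictly increasing function. Geometrically this replacement reflects the $k$-th coordinate, which is an isometry of $\mathbb{R}^{m+1}$, and hence changes neither $\mathcal{H}^1(G)$ nor the individual quantities $\mathcal{H}^1(f_k(D))$ (reflection being an isometry of the line as well). So I may assume that all of $f_0,\ldots,f_m$ are strictly increasing. Next I set $\sigma(x):=\sum_{k=0}^m f_k(x)$ and $T:=\sigma(D)=\{\sum_{k=0}^m f_k(x)\colon x\in D\}$. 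Because a sum of strictly increasing functions is strictly increasing, $\sigma$ is injective, so each $t\in T$ corresponds to a unique $x\in D$, and I may define $g\colon T\to\mathbb{R}^{m+1}$ by $g(t):=(f_0(x),\ldots,f_m(x))$, where $x=\sigma^{-1}(t)$. By construction $g(T)=G$.

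The crux of the argument is the verification that $g$ is $1$-Lipschitz. Given $t=\sigma(x)$ and $t'=\sigma(x')$, write $a_k:=|f_k(x)-f_k(x')|\ge 0$. Since all $f_k$ are increasing, the differences $f_k(x)-f_k(x')$ all share the common sign of $x-x'$, whence $|t-t'|=\bigl|\sum_{k=0}^m (f_k(x)-f_k(x'))\bigr|=\sum_{k=0}^m a_k$. On the other hand $|g(t)-g(t')|^2=\sum_{k=0}^m a_k^2\le\bigl(\sum_{k=0}^m a_k\bigr)^2=|t-t'|^2$, the inequality holding because the $a_k$ are nonnegative, so the cross terms only add. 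Thus $|g(t)-g(t')|\le|t-t'|$, as desired. With this in hand, Lemma~\ref{Lipschitz_lemma} (applied with exponent $1$ and constant $1$) gives $\mathcal H^1(G)=\mathcal H^1(g(T))\le\mathcal H^1(T)$, and Lemma~\ref{lem:measureOfTheSum} bounds $\mathcal H^1(T)\le\sum_{k=0}^m\mathcal H^1(f_k(D))$, which completes the proof.

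I expect the only genuine subtlety to be the same-sign observation that converts $\bigl|\sum_{k}(f_k(x)-f_k(x'))\bigr|$ into $\sum_k a_k$, since this is precisely where the monotonicity hypothesis is used and where the bound $\sum_k a_k^2\le(\sum_k a_k)^2$ becomes available; the reduction to the increasing case and the two invocations of the prior lemmata are routine.
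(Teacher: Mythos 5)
Your proposal is correct and follows essentially the same route as the paper's own proof: reduce to the strictly increasing case, map the sum set $\{\sum_{k=0}^m f_k(x)\colon x\in D\}$ onto $G$ by a $1$-Lipschitz surjection, and then combine Lemma~\ref{Lipschitz_lemma} with Lemma~\ref{lem:measureOfTheSum}. The only difference is that you spell out the details the paper leaves implicit (the reflection argument justifying the reduction to increasing functions, the injectivity of the sum making the map well defined, and the same-sign computation establishing the Lipschitz constant $1$), all of which are correct.
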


\begin{proof}
Without loss of generality, we may assume that all the functions $f_0,f_1,\ldots,f_m$ are increasing. Let us denote $S:=\{\sum_{k=0}^m f_k(x)\colon x\in D\}$. Then the mapping $F\colon S\rightarrow G$ defined by $F(\sum_{k=0}^m f_k(x)):=(f_0(x),f_1(x),\ldots,f_m(x))$ is Lipschitz with constant 1, and it is surjective. Therefore $\mathcal H^1(G)\le\mathcal H^1(S)$, by Lemma~\ref{Lipschitz_lemma}. The rest follows by Lemma~\ref{lem:measureOfTheSum}.
\end{proof}

We now have all the necessary ingredients for the proof of the main result of this section.

\begin{proof}[Proof of Theorem \ref{thm:1}] 
Notice that the first statement is a consequence of the second and so it is enough to show  the second statement. Let $A$ be of the form
\[A=\{(x,f_1(x),\ldots,f_{n-2}(x),\alpha): x\in [0,1]\}, \]
where $\alpha\in [0,1]$ is fixed and each function $f_i,i\in [n-2]$ is piecewise monotone. Define the set
\[ A' := \{ (x,f_1(x),\ldots,f_{n-2}(x)): x\in [0,1] \} \]
and notice that the fact $\mathcal{H}^1(A) = \mathcal{H}^1(A')$ implies that it is enough to show 
$\mathcal{H}^1(A') \le n-1$.  
	
Since $f_i,i\in [n-2]$, is piecewise monotone, there exists an at most countable partition, $\{S_{i,j}\}_j$, of $[0,1]$ into measurable sets such that the function $f_i$ restricted to each $S_{i,j}$ is monotone. For every $(n-2)$-tuple $\mathbf{j}=(j_1,\ldots,j_{n-2})$ let 
$S_{\mathbf{j}} = S_{1,j_1}\cap \cdots \cap S_{n-2,j_{n-2}}$ and denote by 
$f_{i,\mathbf{j}}$ the restriction of $f_i$ to the set $S_{\mathbf{j}}$. Notice that the sets $\{S_{\mathbf{j}}\}_{\mathbf{j}}$ are pairwise disjoint and that each function $f_{i,\mathbf{j}}$ is monotone.
For every $\mathbf j$ denote 
\[G_{\mathbf j}:=\{(x,f_1(x),\ldots,f_{n-2}(x))\colon x\in S_{\mathbf j}\}.\]
Then, by Lemma~\ref{lem:monotoneCase} applied to the monotone functions $f_{0,\mathbf j}:=\textrm{id}|_{S_{\mathbf j}}$ and $f_{1,\mathbf j},\ldots,f_{n-2,\mathbf j}$ (for every $\mathbf j$), we have
\begin{eqnarray*}\mathcal H^1(A')&=&\sum_{\mathbf j}\mathcal H^1(G_{\mathbf j})\le\sum_{\mathbf j}\left(\mathcal H^1(S_{\mathbf j})+\sum_{k=1}^{n-2}\mathcal H^1(f_k(S_{\mathbf j}))\right) \\
&=&\mathcal H^1([0,1])+\sum_{k=1}^{n-2}\mathcal H^1(f_k([0,1]))\le n-1, 
\end{eqnarray*}
as desired. 
\end{proof}

\section{Proof of Theorem~\ref{thm:3} and Theorem~\ref{thm:4}}\label{sec:4}

In this section we prove Theorem \ref{thm:3}. We begin with the case $n=3$. 
In this case the result is, essentially, proven in \cite{EMP}, in the context of obtaining a continuous analogue of Sperner's theorem, but we repeat here the proof for the sake of completeness. 

The proof employs the following result from measure theory (see \cite[Proposition 5.5.4]{Bogachev}). \\

\begin{lemma}\label{bogachev} Let $f:[0,1]\to [0,1]$ be a function and let $E$ be a measurable set such that at every point of $E$ the function $f(\cdot)$ is differentiable. 
Then 
\[ \lambda(f(E)) \leq \int_E |f'(x)|\; dx . \]
\end{lemma}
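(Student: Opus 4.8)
The plan is to bound the (outer) measure of $f(E)$ by slicing $E$ according to the size of $|f'|$ and reducing each slice to a genuinely Lipschitz situation, so that Lemma~\ref{Lipschitz_lemma} (with $s=1$, recalling that $\mathcal H^1$ coincides with Lebesgue outer measure $\lambda^*$ on the line) can be applied. Since $f$ is only assumed to be a function differentiable at each point of $E$, the image $f(E)$ need not be measurable; I therefore read $\lambda(f(E))$ as $\lambda^*(f(E))$, and I take for granted (as is automatic once $f$ is measurable) that $f'$, and hence each slice defined below, is measurable on $E$.

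Fix $\varepsilon>0$. First I would stratify by the magnitude of the derivative, setting, for $n\ge 0$,
\[ E_n := \{x\in E : n\varepsilon \le |f'(x)| < (n+1)\varepsilon\}, \]
so that $E=\bigcup_n E_n$ is a disjoint union and $\lambda^*(f(E))\le\sum_n \lambda^*(f(E_n))$. On $E_n$ the derivative is pinned down up to the additive error $\varepsilon$. To turn this \emph{pointwise} bound into an \emph{honest} Lipschitz estimate I would further stratify by the scale of differentiability: for $k\ge 1$ put
\[ E_{n,k} := \{x\in E_n : |f(y)-f(x)|\le (n+2)\varepsilon\,|y-x|\ \text{for all}\ y\in[0,1]\ \text{with}\ |y-x|\le 1/k\}. \]
Differentiability at each $x\in E_n$, together with $|f'(x)|<(n+1)\varepsilon$, guarantees that $x\in E_{n,k}$ for all large $k$, so $E_n=\bigcup_k E_{n,k}$ with the sets $E_{n,k}$ increasing in $k$.

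Next I would fix $k$ and cut $[0,1]$ into finitely many intervals $J_1,\dots,J_M$ each of length $<1/k$. For points $x,x'$ lying in the same $E_{n,k}\cap J_i$ one has $|x-x'|<1/k$, so the defining inequality gives $|f(x)-f(x')|\le (n+2)\varepsilon\,|x-x'|$; thus the restriction of $f$ to $E_{n,k}\cap J_i$ is Lipschitz with constant $(n+2)\varepsilon$. Lemma~\ref{Lipschitz_lemma} (with $s=1$) then yields $\lambda^*(f(E_{n,k}\cap J_i))\le (n+2)\varepsilon\,\lambda^*(E_{n,k}\cap J_i)$, and summing over the finitely many intervals $J_i$ (which split outer measure additively, being measurable and partitioning $[0,1]$) gives $\lambda^*(f(E_{n,k}))\le (n+2)\varepsilon\,\lambda(E_{n,k})$. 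Letting $k\to\infty$ and invoking continuity from below of Lebesgue outer measure, I obtain $\lambda^*(f(E_n))\le (n+2)\varepsilon\,\lambda(E_n)$.

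Finally I would sum over $n$. Since $n\varepsilon\le |f'(x)|$ on $E_n$, we have $n\varepsilon\,\lambda(E_n)\le\int_{E_n}|f'|\,dx$, whence
\[ \lambda^*(f(E))\le \sum_n (n+2)\varepsilon\,\lambda(E_n)\le \int_E |f'|\,dx + 2\varepsilon\,\lambda(E)\le \int_E|f'|\,dx+2\varepsilon, \]
and letting $\varepsilon\to 0$ completes the argument. I expect the main obstacle to be precisely the passage from a merely pointwise derivative to a uniform Lipschitz bound on a concrete set: this is what forces the second stratification into the sets $E_{n,k}$ and the cutting into intervals of length $<1/k$. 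It is also the step where one must be careful to phrase every estimate in terms of outer measure (since $f(E)$ may fail to be measurable) and to justify the limit in $k$ by continuity from below. (An alternative would be a direct Vitali covering argument, but the route above has the advantage of reusing Lemma~\ref{Lipschitz_lemma} already established in the paper.)
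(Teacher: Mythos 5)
The paper never proves this lemma: it is imported as a known result with the citation \cite[Proposition 5.5.4]{Bogachev}, so there is no internal argument to compare yours against. Your proof is correct, and it is essentially the classical argument behind that citation, made self-contained and adapted to reuse Lemma~\ref{Lipschitz_lemma}: stratify $E$ by the approximate size of the derivative (the sets $E_n$), stratify again by the scale at which the pointwise derivative bound becomes a uniform difference-quotient bound (the sets $E_{n,k}$), cut $[0,1]$ into intervals of length below $1/k$ so that $f$ is genuinely $(n+2)\varepsilon$-Lipschitz on each $E_{n,k}\cap J_i$, apply Lemma~\ref{Lipschitz_lemma} with $s=1$ together with the fact that $\mathcal H^1$ coincides with Lebesgue outer measure on the line, then let $k\to\infty$ (continuity from below of $\lambda^*$ along arbitrary increasing sequences, which holds by regularity of Lebesgue outer measure) and sum over $n$. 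All of these steps are sound, and your insistence on outer measure wherever the relevant sets may fail to be measurable (images under $f$, the slices $E_{n,k}$) is exactly the right discipline; note in particular that the additivity you invoke over the intervals $J_i$ needs only \emph{their} measurability, not that of $E_{n,k}$.

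The one soft spot is the hypothesis you grant yourself: measurability of $f'$ on $E$, hence of the sets $E_n$, which you genuinely use at the end (via $\sum_n\lambda(E_n)=\lambda(E)\le 1$ and $\sum_n\int_{E_n}|f'|\,dx=\int_E|f'|\,dx$). The lemma only says $f$ is a function, so ``once $f$ is measurable'' is not available as stated. The gap is benign, for two reasons. First, the right-hand side $\int_E|f'(x)|\,dx$ is undefined unless $|f'|$ is measurable on $E$, so the statement tacitly presupposes exactly what you assume. Second, it is in fact automatic: differentiability at every point of $E$ makes $f|_E$ continuous on $E$; the isolated points of $E$ form a countable, hence null, set; and at every accumulation point of $E$ the derivative equals the limit of difference quotients computed along a countable dense subset of $E$, which exhibits $f'|_E$ (off a null set) as a pointwise limit of measurable functions. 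Adding this remark would make your proof fully rigorous under the stated hypotheses.
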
 

We can now proceed with the proof for the existence of a $3$-dBE-set whose $1$-dimensional Hausdorff measure equals $2$. 

 \begin{proof}[Proof of Theorem \ref{thm:3} (case $n=3$)]
Let $h:[0,1]\to[0,1]$ be a continuous, strictly increasing function having zero derivative almost everywhere. 
An example of such a function can be found in \cite{Zaanen_Lux}. 
Now fix $\alpha \in [0,1]$ and define the set 
\[ A = \{ (x,h(x), \alpha) : x\in [0,1] \} . \]
Clearly, $A$ is a $3$-dBE-set and it is enough to show that $\mathcal{H}^1(A)=2$. 
Since $\mathcal{H}^1(A) = \mathcal{H}^1(\{(x,h(x)):x\in[0,1]\})$ it is enough to show that the set 
$D :=\{(x,h(x)):x\in[0,1]\}$ satisfies 
$\mathcal{H}^1(D)=2$. 

Divide $D$ into two parts, namely, 
$D_1=\{(x,h(x)): h'(x)=0\}$ and $D_2 = D\setminus D_1$. 
Since $h'=0$ almost everywhere, the projection of $D_1$ onto the $x$-axis has measure $1$ and so, by Lemma \ref{Lipschitz_lemma}, we have $\mathcal{H}^1(D_1) \ge 1$. 
From Lemma \ref{bogachev}, it follows that the set $h(\{x:h'(x)=0\})$ has measure zero and therefore it follows that the projection of $D_2$ onto the $y$-axis has measure $1$. Thus, by Lemma \ref{Lipschitz_lemma}, $\mathcal{H}^1(D_2) \ge 1$.
Putting these two bounds together, we conclude 
\[ \mathcal{H}^1(D) = \mathcal{H}^1(D_1) + \mathcal{H}^1(D_2) \ge 2  \]
and therefore, using Theorem~\ref{thm:1}, we have $\mathcal{H}^1(D)=2$. The result follows.  
\end{proof}

The proof of Theorem \ref{thm:3}, for $n\ge 4$, is based upon Theorem~\ref{thm:4}. 
Hence we now proceed with the proof of Theorem~\ref{thm:4}, which requires the following lemma. In the proof, $\omega$ denotes the set of all nonnegative integers, $2^\omega$ the set of all infinite dyadic sequences, $2^{<\omega}$ the set of all finite dyadic sequences, and $2^{\leq n}$ the set of all dyadic sequences of length at most $n$ and $2^n$ the set of all dyadic sequences of length equal to $n$ . Finally, given $s=(s_1,\ldots, s_n)\in 2^n$ and $i\in \{0,1\}$, we denote by $s^\wedge\{i\}$  the dyadic sequence $(s_1,\ldots,s_n,i)$.  \\

\begin{lemma}\label{claim:one_interval}
	Let $I=[a,b]\subset[0,1]$ be a closed interval and let $S_I\subset[0,1]$ be such that $\lambda(I\setminus S_I)=0$. Then there exists a set $N_I\subset S_I\cap I$ and a continuous non-decreasing function $f_I\colon I\rightarrow[0,1]$ such that $f_I(a)=0$, $f_I(b)=1$, $\lambda(N_I)=0$ and $\lambda(f_I(N_I))=1$.
\end{lemma}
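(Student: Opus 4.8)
The plan is to realize the devil's staircase $f_I$ as the cumulative distribution function of an atomless singular probability measure whose support lies inside $S_I$, and to take $N_I$ to be that support. Writing $Z:=I\setminus S_I$, the hypothesis $\lambda(I\setminus S_I)=0$ means $Z$ is Lebesgue measurable and null, so $S_I\cap I$ has full measure in $I$. The crucial preliminary move is to pass, by inner regularity of Lebesgue measure, to a \emph{closed} set $K\subset S_I\cap I$ with $\lambda(K)>0$; since $K$ is then uncountable, its Cantor--Bendixson perfect kernel is nonempty, and after relabelling I may assume $K$ is a nonempty perfect subset of $S_I\cap I$. Everything below is carried out inside $K$, which is what forces the containment $N_I\subset S_I$ for free.

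First I would run a dyadic Cantor scheme inside $K$. Indexing by finite dyadic sequences, I construct closed intervals $J_s$ for $s\in 2^{<\omega}$ with $J_\emptyset=I$, such that for each $s$ the intervals $J_{s^\wedge\{0\}}$ and $J_{s^\wedge\{1\}}$ are disjoint, contained in $J_s$, with $J_{s^\wedge\{0\}}$ lying to the left of $J_{s^\wedge\{1\}}$, each meeting $K$. This is possible because every nonempty relatively open portion of the perfect set $K$ is uncountable and hence can be split into two such portions; I also arrange $\max_{s\in 2^n}\diam(J_s)\to 0$ and $\sum_{s\in 2^n}\diam(J_s)\to 0$. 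Setting $K_s:=K\cap J_s$ and $C:=\bigcap_n\bigcup_{s\in 2^n}K_s$, the set $C$ is a Cantor set with $C\subset K\subset S_I\cap I$, and $\lambda(C)=0$ since $\lambda(C)\le\sum_{s\in 2^n}\diam(J_s)\to 0$.

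Next I define the staircase through the natural Cantor measure. Let $\mu$ be the Borel probability measure determined by the consistent assignment $\mu(K_s)=2^{-n}$ for $s\in 2^n$, and set $f_I(x):=\mu([a,x])$. Because $\mu$ is atomless, $f_I\colon I\to[0,1]$ is continuous and non-decreasing, and $f_I(a)=0$, $f_I(b)=\mu(I)=1$. On each complementary gap $(c,d)$ of $C$ one has $\mu((c,d))=0$, so $f_I$ is constant there; since $f_I$ is a continuous non-decreasing surjection onto $[0,1]$ by the intermediate value theorem and is constant on every gap, each value of $[0,1]$ is already attained on $C$, whence $f_I(C)=[0,1]$ and $\lambda(f_I(C))=1$. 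Taking $N_I:=C$ yields $N_I\subset S_I\cap I$, $\lambda(N_I)=0$ and $\lambda(f_I(N_I))=1$, as required; the endpoint normalizations $f_I(a)=0$, $f_I(b)=1$ are built in.

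The main obstacle is conceptual rather than computational. Because the staircase $f_I$ \emph{inflates} measure, sending the null set $C$ onto all of $[0,1]$, one cannot afford to let $N_I$ stray outside $S_I$ even on a Lebesgue-null set: such a stray part could still carry full image measure, so a direct attempt to build $C$ and then discard $C\cap Z$ fails, as does any attempt to keep the scheme's intervals away from a neighborhood of the (possibly dense) null set $Z$. The device that defeats this is precisely the reduction to the closed perfect subset $K\subset S_I$ at the outset: performing the entire Cantor construction inside the closed set $K$ makes $C\subset S_I$ automatic, so the delicate interaction between $Z$ and the measure-inflating map never has to be controlled by hand.
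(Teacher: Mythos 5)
Your proposal is correct and follows essentially the same route as the paper's proof: inner regularity plus the perfect set property yield a closed perfect subset of $S_I\cap I$, a dyadic scheme of nested closed intervals with shrinking total length carves out a null Cantor set $N_I$ inside it (closedness of the ambient set being exactly what keeps the limit points of the scheme inside $S_I$), and a devil's staircase adapted to that Cantor set sends $N_I$ onto a set of full measure. The only cosmetic difference is in how the staircase is built: you take the distribution function $x\mapsto\mu([a,x])$ of the natural Cantor measure on the scheme, whereas the paper conjugates $N_I$ to the ternary Cantor set by an order-preserving homeomorphism and composes with the classical Cantor function; the two devices are interchangeable here.
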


\begin{proof}
	By assumption we have $\lambda(S_I\cap I)=\lambda(I)>0$ and the inner regularity of the Lebesgue measure implies that the set $S_I\cap I$ contains a closed subset $F$ with $\lambda(F)>0$. In particular, $F$ is uncountable. As every closed set has the perfect set property (see~\cite{Kechris}), there is a nonempty perfect (i.e. closed and with no isolated points) subset $\tilde F$ of $F$. By induction on the length of $s$, we will construct closed intervals $J_s$, $s\in 2^{<\omega}$, such that for every $n\in\omega$, the following conditions hold:
	\begin{itemize}
		\item[(i)$_n$] $J_{s^\wedge\{0\}}\cup J_{s^\wedge\{1\}}\subset J_s$ for every $s\in 2^n$,
		\item[(ii)$_n$] the upper endpoint of $J_{s^\wedge\{0\}}$ is strictly below the lower endpoint of $J_{s^\wedge\{1\}}$ for every $s\in 2^n$,
		\item[(iii)$_n$] $\lambda(J_s)\le\frac 1{(n+1)2^n}$ for every $s\in 2^n$,
		\item[(iv)$_n$] the interior of $J_s$ intersects $\tilde F$ for every $s\in 2^n$.
	\end{itemize}
Simply begin the construction by setting $J_\emptyset=I$; then both conditions (iii)$_0$ and (iv)$_0$ are satisfied. Now suppose that we have already defined all intervals $J_s$, $s\in 2^{\le n}$, for some $n\in\omega$ such that conditions (i)$_k$, (ii)$_k$ hold for every $k<n$ and such that conditions (iii)$_k$, (iv)$_k$ hold for every $k\le n$. By $(iv)_n$ we know that the interior of $J_s$ intersects $\tilde F$ for every $s\in 2^n$. As $\tilde F$ is a perfect set, the intersection $J_s\cap\tilde F$ is an infinite (even uncountable) set for every $s\in 2^n$. So for every $s\in 2^n$, we can find two points $r_{s^\wedge\{0\}}<r_{s^\wedge\{1\}}$ in $J_s\cap\tilde F$. Then for every $s\in 2^n$, we can find closed intervals $J_{s^\wedge\{0\}}$ and $J_{s^\wedge\{1\}}$ containing $r_{s^\wedge\{0\}}$ and $r_{s^\wedge\{1\}}$, respectively, in such a way that conditions (i)$_n$, (ii)$_n$ and (iii)$_{n+1}$ are satisfied. Then condition (iv)$_{n+1}$ clearly holds as well. This finishes the construction.
	
Let $N_I$ be the subset of $I$ given by $N_I=\bigcap_{n\in\omega}\bigcup_{s\in 2^n}J_s$. Then for every $m\in\omega$, we have by condition (iii)$_m$ that
	\begin{equation*}
		\lambda(N_I)\le\lambda\left(\bigcup_{s\in 2^m}J_s\right)\le\sum_{s\in 2^m}\lambda(J_s)\le\frac 1{m+1},
	\end{equation*}
which implies that $\lambda(N_I)=0$. Also, for every $x\in N_I$ and every $n\in\omega$ there is $s\in 2^n$ such that $x\in J_s$, and by (iii)$_{n}$ and (iv)$_n$ there is $y\in\tilde F$ such that $|x-y|\le\frac 1{(n+1)2^n}$. It follows that every $x\in N_I$ is in the closure of the closed set $\tilde F$, and so $N_I\subset\tilde F\subset F\subset S_I\cap I$.
	
For every $x\in N_I$ there is (by (i)$_n$ and (ii)$_n$) a unique $\alpha_x\in 2^\omega$ such that $x\in\bigcap_{n\in\omega}J_{\alpha_x|_n}$. Moreover, if $x,y$ are two distinct points from $N_I$ then $\alpha_x\neq\alpha_y$ by (iii)$_n$. It clearly follows that the mapping $\phi\colon N_I\rightarrow[0,1]$ which maps each $x\in N_I$ to the unique point from $\bigcap_{n=1}^\infty[\frac{2\alpha_x(n)}{3^n},\frac{2\alpha_x(n)+1}{3^n}]$ is an order preserving homeomorphism of the set $N_I$ onto the standard ternary Cantor set $C\subset[0,1]$. Let $\Phi\colon I\rightarrow[0,1]$ be the continuous non-decreasing extension of $\phi$ which is linear on each open subinterval of $I\setminus N_I$. Let $c\colon[0,1]\rightarrow[0,1]$ be the Devil's staircase  (see \cite{DMRV}), i.e. a continuous non-decreasing function with $c(0)=0$ and $c(1)=1$ which is constant on every open subinterval of $[0,1]\setminus C$, and so $\lambda(c(C))=1$. Then we define $f_I=c\circ\Phi$. This is clearly a continuous and non-decreasing function with $f_I(a)=c(0)=0$, $f_I(b)=c(1)=1$ and $\lambda(f_I(N_I))=\lambda(c(C))=1$.
\end{proof}

We can now prove Theorem~\ref{thm:4}.

\begin{proof}[Proof of Theorem~\ref{thm:4}]
Let $S\subset[0,1]$ be such that $\lambda(S)=1$. Let $I_n=[a_n,b_n]$, $n\in\omega$, be an enumeration of all closed subintervals of $[0,1]$ with rational endpoints. By Lemma~\ref{claim:one_interval}, we can find (by induction on $n$) sets $N_{I_n}\subset S\setminus\bigcup_{m<n}N_{I_m}$, $n\in\omega$, and continuous non-decreasing functions $f_{I_n}\colon I_n\rightarrow[0,1]$, $n\in\omega$, such that $f_{I_n}(a_n)=0$, $f_{I_n}(b_n)=1$, $\lambda(N_{I_n})=0$ and $\lambda(f_{I_n}(N_{I_n}))=1$ for every $n\in\omega$. 

For every $n\in\omega$, let $g_n\colon[0,1]\rightarrow[0,1]$ be the unique continuous non-decreasing extension of $f_{I_n}$ from $I_n$ to the whole interval $[0,1]$. Finally, we define $N=\bigcup_{n\in\omega}N_{I_n}$ and 
\[ f(x)=\sum\limits_{n\in\omega}\frac 1{2^{n+1}}g_n(x),\; \text{for}\; x\in[0,1].\] 
Clearly, $N\subset S$ and $\lambda(N)=0$. Since the sum in the definition of $f$ converges uniformly and all summands are continuous functions, it follows that $f$ is continuous as well. Also, $f$ is the sum of non-decreasing functions and so it is non-decreasing as well. Moreover, if $x<y$ are two points from $[0,1]$ then there exists $m\in\omega$ such that $x<a_m<b_m<y$. Now notice that 
\begin{equation*}
	f(y)-f(x)=\sum\limits_{n\in\omega}\frac 1{2^{n+1}}(g_n(y)-g_n(x))\ge\frac 1{2^{m+1}}(g_m(y)-g_m(x))=\frac 1{2^{m+1}}>0,
\end{equation*}
which implies that $f$ is strictly increasing. It is obvious that $f(0)=0$ and $f(1)=1$, so it only remains to show that $\lambda(f(N))=1$. 

To this end, let us fix $m\in\omega$ for a while. If $N_{I_m}$ and $f_{I_m}$ were constructed in the same way as in the proof of Lemma~\ref{claim:one_interval}, then $N_{I_m}$ is closed and $g_m$ is constant on every open subinterval of $[0,1]\setminus N_{I_m}$. So if $f^{(m)}\colon[0,1]\rightarrow[0,1]$ is the (strictly increasing and continuous) function given by 
\[ f^{(m)}(x)=\sum\limits_{\substack{n\in\omega\\n\neq m}}\frac 1{2^{n+1}}g_n(x),\; \text{for}\; x\in[0,1],\] 
then for every open subinterval $(c,d)$ of $[0,1]$ which does not intersect $N_{I_m}$ we have
\begin{equation*}
	\lambda(f(c,d))=f(d)-f(c)=f^{(m)}(d)-f^{(m)}(c)=\lambda(f^{(m)}((c,d))).
\end{equation*}
Since the set $[0,1]\setminus N_{I_m}$ is the union of countably many pairwise disjoint such subintervals $(c_i,d_i)$, $i\in\omega$, and of a subset of $\{0,1\}$ it follows that
\begin{equation}\label{eq:image_of_complement}
\begin{split}
\lambda(f([0,1]\setminus N_{I_m}))&=\sum\limits_{i\in\omega}\lambda(f((c_i,d_i)))=\sum\limits_{i\in\omega}\lambda(f^{(m)}((c_i,d_i)))\\
&=\lambda(f^{(m)}([0,1]\setminus N_{I_m}))\le\lambda(f^{(m)}([0,1]))\\
&=f^{(m)}(1)-f^{(m)}(0)=1-\frac 1{2^{m+1}}.
\end{split}
\end{equation}
On the other hand,
\begin{equation}\label{eq:image}
\lambda(f([0,1]))=f(1)-f(0)=1.
\end{equation}
Comparing equations (\ref{eq:image_of_complement}) and (\ref{eq:image}) yields $\lambda(f(N_{I_m}))\ge\frac 1{2^{m+1}}$.
Since this is true for every $m\in\omega$ and all the sets $N_{I_m}$, $m\in\omega$, are pairwise disjoint, the monotonicity of $f$ implies   
\begin{equation*}
	\lambda(f(N))=\sum_{m\in\omega}\lambda(f(N_{I_m}))\ge\sum_{m\in\omega}\frac 1{2^{m+1}}=1,
\end{equation*}
as required.
\end{proof}

We now have all ingredients for the proof of Theorem~\ref{thm:3}, when $n\ge 4$. 

\begin{proof}[Proof of Theorem~\ref{thm:3} (case $n\ge 4$)]
Let $h:[0,1]\to[0,1]$ be a singular function and let 
$S_1:= h(\{x: h'(x)\neq 0\})$. Notice that Lemma~\ref{bogachev} implies that $\lambda(S_1)=1$. 
By Theorem~\ref{thm:4} there exists a strictly increasing function $f_1:[0,1]\to [0,1]$ and a set $N_1\subset S_1$ with $\lambda(N_1)=0$ such that $\lambda(f_1(N_1))=1$. 
Now for $j=2,\ldots,n-3$ define recursively $S_j=S_{j-1}\setminus N_{j-1}$ and apply Theorem~\ref{thm:4} 
to choose a strictly increasing function $f_j:[0,1]\to [0,1]$ and a set $N_j\subset S_j$ with $\lambda(N_j)=0$ such that $\lambda(f_j(N_j))=1$. Let $\alpha\in [0,1]$ be fixed, and consider the set 
\[ A := \{(x,h(x), f_1(h(x)), \ldots, f_{n-3}(h(x)), \alpha): x\in [0,1]\} . \]
Since the functions $h,f_1,\ldots,f_{n-3}$ are strictly increasing it follows that $A$ is an $n$-dBE-set. Moreover, it is enough to show that 
\[\mathcal{H}^1\left(\{ (x,h(x), f_1(h(x)), \ldots, f_{n-3}(h(x)): x\in [0,1])\} \right)=n-1 .\]
To this end, we proceed as in the proof of the case $n=3$ and show that the set 
\[ D = \{ (x,h(x), f_1(h(x)), \ldots, f_{n-3}(h(x)): x\in [0,1])\} \] 
can be written as the disjoint union of $n-1$ sets that project onto a particular coordinate to sets of measure one. 
Consider the sets 
\[D_1 := \{(x,h(x), f_1(h(x)), \ldots, f_{n-3}(h(x))): h'(x)=0\}\quad \text{and}\quad D_2 = D\setminus D_1.\]
As in the proof of the case $n=3$ it follows that $\mathcal{H}^1(D_1)\ge 1$. We now look at the set $D_2$. 
Let $W_j = h^{-1}(N_j)$, for  $j=1,\ldots, n-3$, and notice that $\lambda(f_j(N_j))=\lambda(f_j(h(W_j)))=1$. 
Since $h(\cdot)$ is injective, it follows that we can write 
\[S_{n-3} = h\big(\{x:h'(x)\neq 0\}\setminus \{W_1\cup\cdots  \cup  W_{n-4}\}\big) . \]
Moreover, $\lambda(S_{n-3})=1$. Thus, denoting   
$Q_1 := \{x:h'(x)\neq 0\}\setminus (W_1\cup\cdots \cup W_{n-3})$,   
it follows that the set $D_2$ can be written as the disjoint union of the sets $D_{2,1},D_{2,2},\ldots, D_{2,n-2}$, where 
\[ D_{2,j}= \{(x,h(x), f_1(h(x)), \ldots, f_{n-3}(h(x)): x\in W_j\}, \; \text{for}\; j=1,\ldots,n-3 \]
and 
\[ D_{2,n-2} = \{(x,h(x), f_1(h(x)), \ldots, f_{n-3}(h(x)): x\in Q_1\} . \]
Since $\lambda(h(Q_1))=1$ it follows that the set $D_{2,n-2}$ projects onto the second coordinate to a set of measure one; hence $\mathcal{H}^1(D_{2,n-2})\ge 1$. Similarly for  $j=1,\ldots,n-3$, 
since $\lambda(f_j(h(W_j)))=1$ it follows that the set 
$D_{2,j}, j=1,\ldots, n-3$, projects onto the $(j+2)$-th coordinate to a set of measure one; hence 
$\mathcal{H}^1(D_{2,j})\ge 1$ as well. 
Thus 
\[ \mathcal{H}^1(D) = \mathcal{H}^1(D_1) + \sum_{j=1}^{n-2}\mathcal{H}^1(D_{2,j}) \ge n-1 .\]
Theorem~\ref{thm:1} finishes the proof.   
\end{proof}

\end{document}